\documentclass{amsart}

\usepackage{hyperref}
\hypersetup{%
  colorlinks=true,
  linkcolor=black,
  citecolor=black,
  urlcolor=blue,
  pdftitle={On the existence of global solutions for the $3$D chemorepulsion system},
  pdfauthor={},
  pdfkeywords={},
  bookmarksopen=true,
}
\usepackage{url}

\usepackage{enumerate}
\usepackage{amsthm,amsmath,amssymb}
\usepackage{mathtools}
\usepackage{accents}

\usepackage[polish,english]{babel}
\selectlanguage{english}
\usepackage[utf8]{inputenc}
\usepackage[OT4]{fontenc}

\usepackage{fancyhdr}

\pagestyle{fancy}
\fancyhead{}
\fancyfoot{}
\fancyhead[CE]{\scriptsize\uppercase{Tomasz Cie\'slak, Mario Fuest, Karol Hajduk $\&$ Miko{\L}aj Sier\.z{\k{e}}ga}}
\fancyhead[CO]{\scriptsize\uppercase{On existence of global solutions for the $3$D chemorepulsion system}}
\fancyfoot{}
\fancyhead[LE, RO]{\scriptsize\thepage}

\newtheorem{theorem}{Theorem}[section]
\newtheorem{lemma}[theorem]{Lemma}

\newtheorem{proposition}[theorem]{Proposition}

\theoremstyle{definition}

\theoremstyle{remark}
\newtheorem{remark}[theorem]{Remark}

\numberwithin{equation}{section}


\providecommand{\abs}[1]{\left\lvert{#1}\right\rvert}
\providecommand{\norm}[1]{\left\lVert{#1}\right\rVert}

\newcommand{\lr}[1]{\left({#1}\right)}
\newcommand{\lra}[1]{\left\lbrack{#1}\right\rbrack}

\def\div{\rm div}

\newcommand{\N}{{\ensuremath{\mathbb N}}}
\newcommand{\R}{{\ensuremath{\mathbb R}}}

\newcommand{\intom}{\int_\Omega}
\newcommand{\tmax}{T_{\max}}
\newcommand{\ure}{\mathrm{e}}
\newcommand{\leb}[1]{L^{#1}(\Omega)}
\newcommand{\sobo}[2]{W^{#1, #2}(\Omega)}
\newcommand{\Ombar}{\bar \Omega}

\newcommand{\ds}{\,\mathrm ds}

\newcommand{\hp}{\hphantom}
\newcommand{\pe}{\mathrel{\hp{=}}}

\newcommand{\defs}{\coloneqq}
\newcommand{\sfed}{\eqqcolon}

\def\<{{\langle}}
\def\>{{\rangle}}
\def\:{{\colon}}

\def\d{{\rm d}}
\def\e{{\rm e}}

\def\vro{{\varrho}}
\def\vreps{{\varepsilon}}

\def\b4{\frac{1}{4\pi}}
\def\2b4{\frac{1}{2\pi}}
\def\iR3{\int_{\R^3}}

\begin{document}

\title{On the existence of global solutions for the $3$d chemorepulsion system}

\author{Tomasz Cie\'slak} 
\address{Institute of Mathematics, Polish Academy of Sciences,
\'Sniadeckich 8, 00-656 Warsaw, Poland}
\email{cieslak@impan.pl}


\author{Mario Fuest}
\address{Leibniz  Universit\"at Hannover, Institut f\"ur Angewandte Mathematik, Welfengarten 1,
30167 Hannover, Germany}

\email{fuest@ifam.uni-hannover.de}


\author{Karol Hajduk} 
\address{Institute of Mathematics, Polish Academy of Sciences,
\'Sniadeckich 8, 00-656 Warsaw, Poland}
\email{khajduk@impan.pl}

\author{Miko{\L}aj Sier\.z{\k{e}}ga} 
\address{Faculty of Mathematics, Informatics and Mechanics, University of Warsaw,
Banacha 2, 02-097 Warsaw, Poland}
\email{m.sierzega@uw.edu.pl}

\subjclass[2010]{Primary 35B45, 35K45; Secondary 92C17}


\keywords{boundedness of solutions, chemotaxis, chemorepulsion, Lyapunov-like functional, Bernis-type inequality}

\begin{abstract}
In this paper, we give sufficient conditions for global-in-time existence of classical solutions for the fully parabolic chemorepulsion system posed on a convex, bounded three-dimensional domain. Our main result establishes global-in-time existence of regular nonnegative solutions provided that $\nabla\sqrt{u} \in L^4(0, T; L^2(\Omega))$. Our method is related to the Bakry--\'Emery calculation and appears to be new in this context.
\end{abstract}

\maketitle

\section{Introduction}

\makeatletter
    \renewcommand{\theequation}{{\thesection}.\@arabic\c@equation}
    \renewcommand{\thetheorem}{{\thesection}.\@arabic\c@theorem}
    \renewcommand{\thedefinition}{{\thesection}.\@arabic\c@definition}
    \renewcommand{\thecorollary}{{\thesection}.\@arabic\c@corollary}
    \renewcommand{\theproposition}{{\thesection}.\@arabic\c@proposition}
\makeatother

In this paper, we study the problem of global existence of solutions of the fully parabolic chemorepulsion system. The two-dimensional case was solved in \cite{CLMR}. Unlike in the more-widely known chemoattraction case, $2$D chemorepulsion leads to the global-in-time existence of classical solutions regardless of the size of the initial data. The question of global existence in three and higher dimensions remains open. In the present paper, we look into the $3$D case and establish a conditional global regularity result for this model. First, we introduce the model.

Let $\Omega \subset \R^n$ be an open, bounded domain with a sufficiently smooth boundary. We consider the following fully parabolic chemorepulsion system
\begin{equation} \label{CRu}
\left\{\begin{split}
\partial_tu &= \nabla \cdot (\nabla u + u\nabla v) \\ 
\partial_t v &= \Delta v - v + u
\end{split}\right. \qquad \mbox{in} \qquad (0, \infty) \times \Omega,
\end{equation}
with homogeneous Neumann boundary conditions (no-flux through the boundary)
\begin{align}\label{BC}
\left.\frac{\partial u}{\partial \nu}\right\rvert_{\partial\Omega} = 0 \qquad \& \qquad \left.\frac{\partial v}{\partial \nu}\right\rvert_{\partial\Omega} = 0,
\end{align}
where $\nu$ is the unit outward normal to the boundary, and with nonnegative initial conditions
\begin{align}\label{IC}
u(0, x) = u_0(x) \ge 0 \qquad \& \qquad v(0, x) = v_0(x) \ge 0.
\end{align}
The functions $u$ and $v$ describe densities of some living organisms and of a chemical substance which repels them, respectively. The `$+$' sign on the right-hand-side of the first equation in $(\ref{CRu})$ corresponds to the repulsion mechanism. The opposite phenomenon appears in
the widely studied chemoattraction case described by the Keller--Segel system.
For an overview of results for such systems, we refer to the surveys \cite{BBTW, LW}. The latter survey contains a chapter concerning the construction of solutions, including the irregular ones, to the fully parabolic chemorepulsion system.

Three and higher-dimensional cases of the chemorepulsion system are still far from being understood. While we know that global weak solutions exist (see \cite{CLMR}), it is unclear whether regular bounded solutions exist for all $t>0$. Some results concerning the perturbation of the parabolic-elliptic case are known, see \cite{CF}. Moreover, for the problem with nonlinear, sufficiently strong diffusion, see the global existence result in \cite{Freitag}. Similarly, it is known that nonlinear, sufficiently weak chemorepulsion leads to global-in-time solutions, see \cite{Tao}. However, the main basic problem lacks a definitive answer. In the present note we establish a conditional result.

We emphasise that our result is rather of methodological meaning. On the one hand, our method applied to the 3D chemorepulsion yields only a conditional result. Moreover, as communicated to us by M.~Winkler and one of the referees, this result can be improved, see Appendix~\ref{sec:alt_proof}. Indeed, the result in Appendix~\ref{sec:alt_proof} covers our Thereom~\ref{mainresult} and is applicable also in non-convex domains. Notice however, that our method of estimating the Fisher information along the solution of a system of partial differential equations seems promising in other type of problems. The inequality from Appendix A turns out to be very helpful in such an approach. Indeed, in \cite{BC} Fisher's information together with inequality (\ref{cieslakineq}) were succesfully applied to obtain global-in-time unique regular solutions to the 1D thermoelasticity problem. Very recently, another application yielding progress in 1D combustion theory was performed in \cite{LY}. Last, but not least, notice that our approach gives also an qualitative conditional result, namely it implies that concavity of $v$ is sufficient to obtain global solution to \eqref{CRu}, see Section~\ref{s:conclusion}.     

The problem \eqref{CRu}--\eqref{IC} captured attention of groups of  researchers, in particular due to its role in the attraction-repulsion competition, which appears to play a role in the modeling of, among other things, Alzheimer's disease, see for instance \cite{luca}. The biological meaning of the attraction-repulsion competition was widely investigated, see for instance the contributions in \cite{lin_mu, nagaial,TaoWang}.

Let us formulate our main result.

\begin{theorem}\label{mainresult}
Let $\Omega \subset \R^3$ be a convex, smooth bounded domain and let $u_0, v_0 \in W^{1, p}(\Omega)$ for some $p > 3$ with $0 \not\equiv u_0 \ge 0$ and $v_0 \ge 0$ in $\Omega$. Suppose that $\tmax \in (0, \infty]$ is the maximal existence time of the classical solution to the system \eqref{CRu}--\eqref{IC}, constructed in \cite[Theorem~2.1]{CLMR} (cf.\ Lemma~\ref{local_ex} below). If $\nabla\sqrt{u} \in L^4\lr{0, \tmax; L^2(\Omega)}$, then $
\tmax=\infty$.
\end{theorem}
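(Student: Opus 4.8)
The plan is to show that the assumption $\nabla\sqrt{u} \in L^4(0,\tmax;L^2(\Omega))$ rules out the blow-up criterion associated with the local existence theory, so that $\tmax < \infty$ is impossible. Let me think about how the conditional hypothesis controls the relevant norms.

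The standard blow-up criterion for such chemotaxis systems is of the form: if $\tmax<\infty$, then $\limsup_{t\to\tmax}\norm{u(\cdot,t)}_{L^\infty(\Omega)}=\infty$ (or a criterion in some $W^{1,p}$ or $L^\infty$ norm). So I would first recall, from the local theory in Lemma~\ref{local_ex}, the precise extensibility criterion. Then my goal becomes: derive, from $\nabla\sqrt u\in L^4(0,\tmax;L^2)$, an a priori $L^\infty$ (or sufficient $L^q$) bound on $u$ up to time $\tmax$.

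First I would collect the "free" a priori bounds that hold regardless of the conditional hypothesis. Mass is conserved: $\intom u(\cdot,t)\,\mathrm dx = \intom u_0$ for all $t$, since integrating the first equation and using the no-flux boundary condition kills the divergence term. For $v$, testing the second equation and using parabolic regularity gives control of $v$ in terms of $u$. The key quantity I want to extract from the hypothesis is the Fisher information $\intom\frac{|\nabla u|^2}{u} = 4\intom|\nabla\sqrt u|^2 = 4\norm{\nabla\sqrt u}_{L^2(\Omega)}^2$; the hypothesis says precisely that this lies in $L^2(0,\tmax)$ as a function of time. The plan is to convert this into an $L^p$-in-space bound via a Sobolev/Gagliardo--Nirenberg inequality. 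Writing $w=\sqrt u$, control of $\nabla w$ in $L^2$ together with the $L^2$ mass bound $\norm{w}_{L^2}^2=\norm{u}_{L^1}$ being constant gives, by Gagliardo--Nirenberg in three dimensions, $\norm{w}_{L^6(\Omega)}\lesssim\norm{\nabla w}_{L^2(\Omega)}+\norm{w}_{L^2(\Omega)}$, i.e. a bound on $\norm{u}_{L^3(\Omega)}$ by $\norm{\nabla\sqrt u}_{L^2(\Omega)}^2$ plus constants. Combined with the $L^4$-in-time integrability, this yields $u\in L^2(0,\tmax;L^3(\Omega))$, and interpolating with $u\in L^\infty(0,\tmax;L^1)$ should give a space-time bound in an $L^q_{t,x}$ norm with good exponents.

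Next I would run a bootstrap. The second equation $\partial_t v=\Delta v-v+u$ with the space-time integrability of $u$ just obtained gives, via maximal $L^p$-regularity for the Neumann heat semigroup, a bound on $\nabla v$ in a suitable $L^\alpha(0,\tmax;L^\beta(\Omega))$ space. I would then test the first equation $\partial_t u=\nabla\cdot(\nabla u+u\nabla v)$ against powers $u^{q-1}$ to produce a differential inequality for $\intom u^q$; the repulsive sign (the `$+$' in $u\nabla v$) is favourable here, since after integration by parts the cross term $-(q-1)\intom u^{q-1}\nabla u\cdot\nabla v = \frac{q-1}{q}\intom u^q\Delta v$ can be closed using the $v$-equation to re-express $\Delta v=\partial_t v+v-u$, which introduces a helpful $-\frac{q-1}{q}\intom u^{q+1}$ dissipative term. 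Using the gradient bound on $\nabla v$ and the already-established integrability of $u$, I would derive $u\in L^\infty(0,\tmax;L^q(\Omega))$ for $q$ large enough, and then a final Moser-type iteration or one more application of smoothing estimates promotes this to an $L^\infty$ bound, contradicting $\tmax<\infty$ via the extensibility criterion.

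The main obstacle is the quantitative bookkeeping of exponents in three dimensions: I must check that the integrability $u\in L^2(0,\tmax;L^3)$ squeezed out of $\nabla\sqrt u\in L^4_tL^2_x$ is strong enough to feed the maximal-regularity step and close the $L^q$ testing argument without circularity. The delicate point is that in 3D the borderline Sobolev exponent $L^3$ is exactly critical for the coupling, so the argument likely hinges on exploiting the favourable repulsive sign to gain the extra $L^{q+1}$ dissipation rather than relying on the linear estimates alone; getting the constants and the gradient-of-$v$ estimate to absorb into this dissipation is where the real work lies.
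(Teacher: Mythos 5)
Your strategy is sound in outline, but it is not the paper's main argument: it is, in essence, the alternative proof that the paper itself records in Appendix~\ref{sec:alt_proof} and credits to M.~Winkler and an anonymous referee. The paper's own proof of Theorem~\ref{mainresult} runs through an entropy (Fisher-information) estimate: Lemma~\ref{mainestimate} differentiates $J(t)=4\intom|\nabla\sqrt u|^2+\intom|\Delta v|^2$ along the flow, the mixed term $\intom(\nabla\sqrt u)^T D^2v\,\nabla\sqrt u$ is controlled by Winkler's Bernis-type inequality and the elliptic estimate $\norm{D^2v}_{L^6}\le C\norm{\nabla\Delta v}_{L^2}$ (Lemmas~\ref{winkler}, \ref{lemmatwo}, \ref{lemmaone}), convexity of $\Omega$ is used to discard a boundary term coming from Bochner's formula, and then a Gr\"onwall argument with $K(T)=c_1\int_0^T\lr{\intom|\nabla\sqrt u|^2}^2<\infty$ upgrades the hypothesis to the \emph{uniform-in-time} bound $u\in L^\infty(0,T;L^3(\Omega))$ (Lemma~\ref{u_l3}), after which semigroup estimates give $L^\infty$ bounds (Lemma~\ref{u_linfty}). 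You instead convert the hypothesis directly, via Gagliardo--Nirenberg and mass conservation, into the \emph{space-time} bound $u\in L^2(0,\tmax;L^3(\Omega))$, then invoke maximal Sobolev regularity for the $v$-equation and an $L^q$-testing/Gr\"onwall argument. That is precisely the scheme of Appendix~\ref{sec:alt_proof}, and it buys something the main proof does not: no convexity of $\Omega$ and none of the entropy machinery is needed.

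There is, however, a genuine flaw in the mechanism you propose for closing the testing step. You suggest re-expressing $\Delta v=\partial_t v+v-u$ inside the cross term $\frac{q-1}{q}\intom u^q\Delta v$ so as to gain the dissipative contribution $-\frac{q-1}{q}\intom u^{q+1}$. But this substitution also produces $\frac{q-1}{q}\intom u^q\,\partial_t v$, which you never control; this term is exactly the obstruction that defeats the naive 3D argument, and there is no evident way to absorb it. The closure that works (and is what Appendix~\ref{sec:alt_proof} does) keeps $\Delta v$ as is: by H\"older, $\intom u^2\Delta v\le\norm{u}_{L^3}^2\norm{\Delta v}_{L^3}$; then Gagliardo--Nirenberg ($\norm{u}_{L^3}\le C\norm{u}_{W^{1,2}}^{1/2}\norm{u}_{L^2}^{1/2}$) and Young's inequality let the $\norm{u}_{W^{1,2}}^2$ part be absorbed into the dissipation, leaving $\dot y\le h\,y$ with $y=\intom u^2$ and $h=c\lr{\norm{\Delta v}_{L^3}^2+1}$, which is integrable in time by maximal regularity; Gr\"onwall then gives $u\in L^\infty(0,\tmax;L^2(\Omega))$, and $2>\frac32$ suffices for prolongation in 3D. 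Note also that your closing remark --- that the argument ``hinges on exploiting the favourable repulsive sign'' --- is a red herring on this route: the cross term is estimated in absolute value, so the sign of the coupling plays no role; what rescues the scaling-critical exponent $L^2(0,\tmax;L^3(\Omega))$ is the linear Gr\"onwall structure of the resulting differential inequality, not repulsion.
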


The paper is organised as follows: In Section $\ref{s:prelim}$, we introduce some technical tools such as the Winkler version of the Bernis-type inequality, Bochner's formula, the behaviour of the normal derivative of the gradients of regular functions at the boundaries of convex domains and further preparatory inequalities. In Section~\ref{sec:local_ex}, we then recall well-known properties of solutions to \eqref{CRu}--\eqref{IC}.

Section~\ref{section_main} is devoted to our entropy estimate. It is a core and the main novelty of our approach. We estimate the time derivative of the entropy production term occurring in the Lyapunov functional. The latter entropy production term resembles the Fisher information along the heat flow and we utilize this similarity. Having this estimate established, in Section~\ref{fourth} we proceed with the proof of Theorem~\ref{mainresult}.

In Appendix~$\ref{s:bernis}$, we prove a new functional  inequality, which we arrived at as a byproduct of our investigations. It seems  interesting in its own right. Appendix~\ref{sec:alt_proof} is devoted to the proof of an observation due to M.\.Winkler and one of the anonymous referees, which improves the conditional result.

\section{Preliminaries} \label{s:prelim}

\makeatletter
    \renewcommand{\theequation}{{\thesection}.\@arabic\c@equation}
    \renewcommand{\thetheorem}{{\thesection}.\@arabic\c@theorem}
    \renewcommand{\thedefinition}{{\thesection}.\@arabic\c@definition}
    \renewcommand{\thecorollary}{{\thesection}.\@arabic\c@corollary}
    \renewcommand{\theproposition}{{\thesection}.\@arabic\c@proposition}
\makeatother

In this section, we collect some computations and known results which will be useful in the sequel.

We begin with the flat case of the well-known Bochner formula.
\begin{proposition}\label{Bochner}
Let $\Omega \subset \R^n$, $n \in \N$, be a smooth domain and let $u \in C^3(\bar \Omega)$. Then
\begin{equation}\label{bochnerformula}
\frac{1}{2}\Delta\abs{\nabla u}^2 = \nabla\lr{\Delta u}\cdot \nabla u +\abs{D^2u}^2
\quad \text{in}\quad \Ombar.
\end{equation}
\end{proposition}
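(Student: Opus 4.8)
The plan is to prove the Bochner formula $\frac{1}{2}\Delta|\nabla u|^2 = \nabla(\Delta u)\cdot\nabla u + |D^2u|^2$ by direct computation in Cartesian coordinates, using the flatness of $\mathbb{R}^n$ (so the metric is trivial and all Christoffel symbols vanish). Since $u\in C^3(\bar\Omega)$, all third-order partial derivatives exist and are continuous, which in particular guarantees the symmetry of mixed partials that the argument relies on.

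Let me sketch the main steps. First I would expand $|\nabla u|^2 = \sum_{k} (\partial_k u)^2$ and apply the Laplacian, writing $\Delta = \sum_j \partial_j^2$. Differentiating once with the product rule gives
\begin{equation*}
\partial_j |\nabla u|^2 = 2\sum_k (\partial_k u)(\partial_j\partial_k u).
\end{equation*}
Differentiating a second time with respect to $x_j$ and summing over $j$ yields
\begin{equation*}
\frac{1}{2}\Delta|\nabla u|^2 = \sum_{j,k}\lr{ (\partial_j\partial_k u)(\partial_j\partial_k u) + (\partial_k u)(\partial_j\partial_j\partial_k u) }.
\end{equation*}
The first double sum is exactly $|D^2u|^2 = \sum_{j,k}(\partial_j\partial_k u)^2$, the squared Frobenius norm of the Hessian. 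For the second sum, I would interchange the order of differentiation in the third derivative (justified by $u\in C^3$) to write $\partial_j\partial_j\partial_k u = \partial_k(\partial_j\partial_j u) = \partial_k(\Delta u)$, so that
\begin{equation*}
\sum_{j,k}(\partial_k u)(\partial_j\partial_j\partial_k u) = \sum_k (\partial_k u)\,\partial_k(\Delta u) = \nabla(\Delta u)\cdot\nabla u.
\end{equation*}
Assembling the two pieces gives the claimed identity.

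The computation is entirely routine, so there is no real obstacle; the only points requiring care are the bookkeeping of indices and the invocation of Schwarz's theorem on equality of mixed partials, which is precisely why the $C^3$ hypothesis is imposed. I would remark that the identity is pointwise and holds on all of $\bar\Omega$ by continuity of all terms involved, and that no boundary conditions or integration are needed at this stage.
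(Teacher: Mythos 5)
Your proposal is correct and is precisely the ``direct calculation'' that the paper's proof invokes without writing out: expand $|\nabla u|^2$ in coordinates, differentiate twice, and use Schwarz's theorem (valid since $u \in C^3(\bar\Omega)$) to identify $\sum_{j,k}(\partial_k u)(\partial_j\partial_j\partial_k u)$ with $\nabla(\Delta u)\cdot\nabla u$. There is nothing to add; the argument is complete and matches the paper's intent.
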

\begin{proof}
This can be checked by a direct calculation.
\end{proof}

The following lemma informs us about the normal derivative of the square of the gradient of a function on the boundary of a convex set, provided the function's normal derivative vanishes.

\begin{lemma}\label{gradBCEvans}
Let $\Omega \subset \R^n$, $n \in \N$, be a convex bounded domain with smooth boundary. Suppose that a function $u \in C^2(\Ombar)$ satisfies
$\frac{\partial u}{\partial \nu}=0$ on $\partial\Omega$. Then
\[
\left.\frac{\partial\abs{\nabla u}^2}{\partial \nu}\right\rvert_{\partial\Omega} \le 0.
\]
\end{lemma}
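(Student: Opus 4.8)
The plan is to reduce the claim to the positivity of the second fundamental form of $\partial\Omega$, which is the analytic manifestation of convexity. First I would compute, at an arbitrary boundary point $x_0 \in \partial\Omega$,
\[
\frac{\partial \abs{\nabla u}^2}{\partial\nu} = 2\,(D^2 u\,\nabla u)\cdot\nu,
\]
so that everything hinges on the sign of the expression $(D^2u\,\nabla u)\cdot\nu$. The crucial structural observation is that the hypothesis $\partial u/\partial\nu = \nabla u\cdot\nu = 0$ forces $\nabla u$ to be tangent to $\partial\Omega$ at every boundary point.

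I would exploit this by differentiating the boundary identity $\nabla u\cdot\nu = 0$ in tangential directions. Concretely, choose an orthonormal frame $\tau_1, \dots, \tau_{n-1}$ of tangent fields near $x_0$ together with a smooth extension of the unit outward normal $\nu$ (equivalently, work with $\nu = \nabla d$ for the signed distance $d$ to $\partial\Omega$). Differentiating $\nabla u\cdot\nu = 0$ along $\tau_j$ gives
\[
(D^2u\,\tau_j)\cdot\nu = -\,\nabla u\cdot(D_{\tau_j}\nu) = -\,\nabla u\cdot S(\tau_j),
\]
where $S = D\nu|_{T\partial\Omega}$ is the shape operator. Writing the tangential vector $\nabla u = \sum_j a_j\tau_j$ and summing yields the clean identity
\[
\frac{\partial\abs{\nabla u}^2}{\partial\nu} = 2\sum_j a_j\,(D^2u\,\tau_j)\cdot\nu = -2\,\nabla u\cdot S(\nabla u) = -2\,\mathrm{II}(\nabla u, \nabla u),
\]
with $\mathrm{II}$ the second fundamental form of $\partial\Omega$ with respect to the outward normal.

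The proof then closes by invoking convexity: for a convex domain, $\mathrm{II}$ is positive semidefinite on each tangent space in the outward-normal convention, so $\mathrm{II}(\nabla u,\nabla u)\ge 0$ and the normal derivative is $\le 0$, as desired. Points where $\nabla u(x_0) = 0$ are trivial, since the right-hand side vanishes there. I expect the main technical care to lie not in any single computation but in setting up the differential geometry cleanly: fixing the sign convention so that convexity really corresponds to $\mathrm{II}\ge 0$, justifying the tangential differentiation through a smooth local extension of $\nu$, and checking that $(D^2u\,\tau_j)\cdot\nu$ is frame-independent so that the contraction with $\nabla u = \sum_j a_j\tau_j$ is legitimate. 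An essentially equivalent route avoids frames by flattening the boundary locally and reading off the sign directly from the curvature terms, but the intrinsic computation above seems the most transparent.
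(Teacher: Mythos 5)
Your proof is correct and is the standard argument: writing $\partial_\nu\abs{\nabla u}^2 = 2(D^2u\,\nabla u)\cdot\nu$, using tangentiality of $\nabla u$ and tangential differentiation of the boundary identity $\nabla u\cdot\nu=0$ to identify this quantity with $-2\,\mathrm{II}(\nabla u,\nabla u)$, and then invoking positive semidefiniteness of the second fundamental form (outward-normal convention) on the boundary of a convex domain. The paper gives no proof of its own beyond the citation to \cite[p.~95]{E}, and the argument found there is essentially the one you describe, so your proposal coincides with the intended proof.
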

\begin{proof}
  See \cite[p.~95]{E}.
\end{proof}

We will use a higher-dimensional version of the Bernis-type inequality given by Winkler \cite[Lemma~3.3]{Winkler} (with $h(\varphi) = \varphi$).
\begin{lemma}\label{winkler}
Let $\Omega \subset \R^n$, $n \in \N$, be a smooth, bounded domain.
For all positive $\varphi \in C^2\lr{\bar{\Omega}}$ with $\frac{\partial \varphi}{\partial \nu} = 0$ on $\partial\Omega$, we have the following inequality
\begin{align}\label{winklerineq}
\int_{\Omega}{\frac{|\nabla \varphi|^4}{\varphi^3}} \le \lr{2 + \sqrt{n}}^2\int_{\Omega}{\varphi \abs{D^2 \log \varphi}^2}.
\end{align}
\end{lemma}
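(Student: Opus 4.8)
The plan is to pass to the logarithmic variable $w \defs \log\varphi$, which converts the inequality into a weighted estimate amenable to a single integration by parts followed by Cauchy--Schwarz. Since $\varphi$ is positive and continuous on the compact set $\Ombar$, it is bounded below by a positive constant, so $w \in C^2(\Ombar)$ is well defined; moreover $\nabla\varphi = \varphi\nabla w$ and $D^2\log\varphi = D^2 w$. Substituting $\varphi = \e^w$ gives $\frac{\abs{\nabla\varphi}^4}{\varphi^3} = \e^w\abs{\nabla w}^4$ and $\varphi\abs{D^2\log\varphi}^2 = \e^w\abs{D^2 w}^2$, so the claim is equivalent to
\[
I \defs \int_\Omega \e^w\abs{\nabla w}^4 \le \lr{2+\sqrt n}^2\int_\Omega \e^w\abs{D^2 w}^2.
\]

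First I would rewrite $I = \int_\Omega \partial_j(\e^w)\,\partial_j w\,\abs{\nabla w}^2$ (with summation over $j$), using $\partial_j(\e^w) = \e^w\partial_j w$, and integrate by parts to move $\partial_j$ off the factor $\e^w$:
\[
I = -\int_\Omega \e^w\,\partial_j\lr{\partial_j w\,\abs{\nabla w}^2} + \int_{\partial\Omega}\e^w\,\abs{\nabla w}^2\,\partial_j w\,\nu_j.
\]
The boundary term vanishes because $\partial_j w\,\nu_j = \nabla w\cdot\nu = \frac{1}{\varphi}\frac{\partial\varphi}{\partial\nu} = 0$ on $\partial\Omega$, which is exactly where the hypothesis $\frac{\partial\varphi}{\partial\nu}=0$ enters. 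Expanding the divergence, $\partial_j(\partial_j w\,\abs{\nabla w}^2) = (\Delta w)\abs{\nabla w}^2 + 2\,\nabla w^\top D^2 w\,\nabla w$, so
\[
I = -\int_\Omega \e^w(\Delta w)\abs{\nabla w}^2 - 2\int_\Omega \e^w\,\nabla w^\top D^2 w\,\nabla w.
\]

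Next I would estimate the two terms pointwise. The trace bound $\abs{\Delta w} = \abs{\mathrm{tr}(D^2 w)} \le \sqrt n\,\abs{D^2 w}$ controls the first, while Cauchy--Schwarz gives $\abs{\nabla w^\top D^2 w\,\nabla w} \le \abs{D^2 w}\abs{\nabla w}^2$ for the second; together these yield $I \le \lr{2+\sqrt n}\int_\Omega \e^w\,\abs{D^2 w}\,\abs{\nabla w}^2$. A final application of Cauchy--Schwarz in $L^2(\Omega)$ with weight $\e^w$, splitting $\e^w = \e^{w/2}\cdot\e^{w/2}$, bounds the right-hand side by $\lr{2+\sqrt n}\big(\int_\Omega \e^w\abs{D^2 w}^2\big)^{1/2} I^{1/2}$. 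If $I=0$ the assertion is trivial; otherwise dividing by $I^{1/2}$ and squaring gives $I \le \lr{2+\sqrt n}^2\int_\Omega \e^w\abs{D^2 w}^2$, which is the claim after undoing the substitution.

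Each step is individually routine; the points requiring care are the integration by parts, where the Neumann condition $\partial_\nu\varphi = 0$ must be invoked to discard the boundary contribution, and the bookkeeping of the constant. The coefficient $2$ originates from differentiating $\abs{\nabla w}^2$ and the $\sqrt n$ from the trace estimate for $\Delta w$, so that the square of their sum produces precisely $\lr{2+\sqrt n}^2$. Positivity of $\varphi$ is essential throughout, both to define $w$ and to legitimise the weighted Cauchy--Schwarz splitting. Notably, convexity of $\Omega$ plays no role here: unlike Lemma~\ref{gradBCEvans}, the boundary term is eliminated by the no-flux condition alone.
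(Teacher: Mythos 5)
Your proof is correct, but there is nothing in the paper to compare it against: the paper does not prove Lemma~\ref{winkler} at all, it simply quotes it from Winkler \cite[Lemma~3.3]{Winkler} (with $h(\varphi)=\varphi$). Your argument is a clean, self-contained substitute, and every step checks out: the substitution $w=\log\varphi$ is legitimate because $\varphi>0$ on the compact set $\bar\Omega$; the integration by parts is valid since $\partial_j w\,|\nabla w|^2\in C^1(\bar\Omega)$ on a smooth bounded domain, and the boundary term dies precisely because $\partial_\nu w=\varphi^{-1}\partial_\nu\varphi=0$; the pointwise bounds $|\Delta w|\le\sqrt n\,|D^2w|$ and $|\nabla w^{T}D^2w\,\nabla w|\le|D^2w|\,|\nabla w|^2$ are standard for the Frobenius norm (which is indeed the norm used throughout the paper, e.g.\ in Bochner's formula \eqref{bochnerformula}); and the final division by $I^{1/2}$ is justified since $I<\infty$ (continuity on $\bar\Omega$), with the degenerate case $I=0$ correctly flagged. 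In substance this is the same mechanism as Winkler's original proof, which works directly in the variable $\varphi$ and with a general weight $h(\varphi)$: integrate by parts against the no-flux condition, estimate the trace and the quadratic-form terms pointwise, then absorb by Cauchy--Schwarz; your logarithmic change of variables streamlines the bookkeeping and delivers the exact constant $(2+\sqrt n)^2$ transparently, with the coefficient $2$ coming from differentiating $|\nabla w|^2$ and $\sqrt n$ from the trace bound. Your closing remark is also accurate and worth keeping: convexity of $\Omega$ plays no role here, in contrast to Lemma~\ref{gradBCEvans}, which is exactly how the paper uses these two results (convexity enters only through the boundary sign condition, not through this inequality).
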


Next, we prove two estimates holding in three-dimensional domains.
The first one relates the Hessian of a function $\varphi$ with $\nabla \Delta \varphi$ (in contrast to the full third-order derivative).
\begin{lemma}\label{lemmatwo}
Let $\Omega \subset \R^3$ be a smooth, bounded domain.
Then there is a positive constant $C$ such that for every $\varphi \in C^3(\bar{\Omega})$ with $\frac{\partial\varphi}{\partial \nu} = 0$ on $\partial\Omega$ we have
\begin{align}\nonumber
\norm{D^2\varphi}_{L^6(\Omega)} \le C\norm{\nabla\Delta\varphi}_{L^2(\Omega)}.
\end{align}
\end{lemma}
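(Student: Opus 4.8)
The plan is to bound $\norm{D^2\varphi}_{L^6(\Omega)}$ by $\norm{\varphi}_{H^3(\Omega)}$ through a Sobolev embedding, then to control $\norm{\varphi}_{H^3(\Omega)}$ via higher-order elliptic regularity for the Neumann Laplacian, and finally to absorb the resulting lower-order terms using the boundary condition together with Poincaré-type inequalities. Throughout, since both $\norm{D^2\varphi}_{L^6(\Omega)}$ and $\norm{\nabla\Delta\varphi}_{L^2(\Omega)}$ are unchanged when a constant is added to $\varphi$, I may assume without loss of generality that $\int_\Omega \varphi = 0$.

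First, because $\Omega \subset \R^3$ is a bounded domain with smooth boundary, the Sobolev embedding $H^1(\Omega) \hookrightarrow L^6(\Omega)$ is available. Applying it to each component $\partial_{ij}\varphi$ of the Hessian---which lies in $H^1(\Omega)$ since $\varphi \in C^3(\Ombar) \subset H^3(\Omega)$---and summing over $i,j$ yields
\[
\norm{D^2\varphi}_{L^6(\Omega)} \le C \norm{D^2\varphi}_{H^1(\Omega)} \le C \norm{\varphi}_{H^3(\Omega)}.
\]
Thus it remains to estimate $\norm{\varphi}_{H^3(\Omega)}$ by $\norm{\nabla\Delta\varphi}_{L^2(\Omega)}$.

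For this, I would invoke the standard higher-order $L^2$ regularity estimate for the Neumann problem on the smooth domain $\Omega$: since $\partial_\nu$ together with $\Delta$ satisfies the complementing (Lopatinski--Shapiro) condition, one has
\[
\norm{\varphi}_{H^3(\Omega)} \le C\lr{ \norm{\Delta\varphi}_{H^1(\Omega)} + \norm{\varphi}_{L^2(\Omega)} },
\]
where the boundary contribution has dropped out precisely because $\partial_\nu\varphi = 0$ on $\partial\Omega$. It then remains to show that both terms on the right are dominated by $\norm{\nabla\Delta\varphi}_{L^2(\Omega)}$. The Neumann condition and the divergence theorem give $\int_\Omega \Delta\varphi = \int_{\partial\Omega}\partial_\nu\varphi = 0$, so $\Delta\varphi$ has zero mean; the Poincaré--Wirtinger inequality then yields $\norm{\Delta\varphi}_{L^2(\Omega)} \le C\norm{\nabla\Delta\varphi}_{L^2(\Omega)}$ and hence $\norm{\Delta\varphi}_{H^1(\Omega)} \le C\norm{\nabla\Delta\varphi}_{L^2(\Omega)}$. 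For the zeroth-order term, the normalization $\int_\Omega\varphi = 0$ together with the identity $\int_\Omega|\nabla\varphi|^2 = -\int_\Omega\varphi\,\Delta\varphi$ (integrating by parts and using $\partial_\nu\varphi = 0$) and Poincaré's inequality gives $\norm{\varphi}_{L^2(\Omega)} \le C\norm{\nabla\varphi}_{L^2(\Omega)} \le C\norm{\Delta\varphi}_{L^2(\Omega)} \le C\norm{\nabla\Delta\varphi}_{L^2(\Omega)}$. Combining these bounds completes the estimate.

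The main obstacle is the correct use of the higher-order Neumann regularity estimate: one must ensure that the $H^3$ bound genuinely holds with vanishing boundary data (so that no trace terms survive) and that the lower-order terms it produces can be removed. The removal step is where the structure of the problem is essential---it is the zero-mean property of $\Delta\varphi$, forced by the no-flux condition, that permits replacing $\norm{\Delta\varphi}_{H^1(\Omega)}$ by $\norm{\nabla\Delta\varphi}_{L^2(\Omega)}$ with no additive constant, which is exactly what makes the final inequality clean. An alternative, more self-contained route would avoid elliptic regularity altogether and instead establish, by repeated integration by parts, an identity of the form $\int_\Omega |D^3\varphi|^2 = \int_\Omega |\nabla\Delta\varphi|^2 + (\text{boundary terms})$; however, controlling those boundary terms on a general, possibly non-convex, domain is delicate, so the elliptic-regularity route is preferable.
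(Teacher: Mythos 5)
Your proof is correct, and its skeleton---elliptic regularity for the Neumann problem, the three-dimensional embedding into $L^6(\Omega)$, and a chain of Poincar\'e-type inequalities exploiting the zero-mean structure forced by the no-flux condition---matches the paper's; what differs is the order of the first two tools and, consequently, which elliptic estimate is invoked. The paper first applies a second-order $L^p$ (Calder\'on--Zygmund type) estimate, namely \cite[Theorem~19.1]{Friedman},
\begin{equation*}
\norm{D^2\varphi}_{L^6(\Omega)} \le c\lr{\norm{\Delta\varphi}_{L^6(\Omega)} + \norm{\varphi - \bar{\varphi}}_{L^6(\Omega)}},
\end{equation*}
and only afterwards uses $W^{1,2}(\Omega) \hookrightarrow L^6(\Omega)$ to pass to $\norm{\nabla\Delta\varphi}_{L^2(\Omega)}$ plus lower-order terms; you embed first, $\norm{D^2\varphi}_{L^6(\Omega)} \le C\norm{\varphi}_{H^3(\Omega)}$, and then need the third-order $L^2$-based Neumann estimate $\norm{\varphi}_{H^3(\Omega)} \le C\lr{\norm{\Delta\varphi}_{H^1(\Omega)} + \norm{\varphi}_{L^2(\Omega)}}$. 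Both estimates are standard for the Neumann problem on a smooth bounded domain (the complementing condition holds), so both routes are sound. Your variant stays entirely in the Hilbert scale, at the price of requiring elliptic regularity one order higher; the paper's variant never estimates third derivatives of $\varphi$ directly but needs $L^6$ elliptic theory. Your treatment of the lower-order terms---zero mean of $\Delta\varphi$ via the divergence theorem, the identity $\intom \abs{\nabla\varphi}^2 = -\intom \varphi\,\Delta\varphi$, and Poincar\'e--Wirtinger---reproduces, with proofs, exactly the three inequalities the paper cites from \cite[Lemma~A.1]{Fuest}, and your remark that both sides of the asserted inequality are invariant under adding constants correctly justifies the normalization $\intom \varphi = 0$.
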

\begin{proof}
According to \cite[Theorem $19.1$]{Friedman}, there is $c_1 > 0$ such that
\begin{align}\nonumber
\norm{D^2\varphi}_{L^6} \le c_1\norm{\Delta\varphi}_{L^6} + c_1\norm{\varphi - \bar{\varphi}}_{L^6}
\end{align}
for every $\varphi \in C^2(\bar \Omega)$ with $\frac{\partial\varphi}{\partial \nu} = 0$ on $\partial\Omega$, where $\bar \varphi=\frac{1}{|\Omega|}\int_\Omega \varphi$. As $W^{1, 2}(\Omega) \hookrightarrow L^6(\Omega)$, we can further estimate
\begin{align}\nonumber
\norm{D^2\varphi}_{L^6} \le c_2\lr{\norm{\nabla\Delta\varphi}_{L^2}^2 + \norm{\Delta\varphi}_{L^2}^2}^{1/2} + c_2\lr{\norm{\nabla\varphi}_{L^2}^2 + \norm{\varphi - \bar{\varphi}}_{L^2}^2}^{1/2}
\end{align}
for every $\varphi \in C^3(\bar\Omega)$ with $\frac{\partial\varphi}{\partial \nu} = 0$ on $\partial\Omega$, for some $c_2 > 0$. The statement then follows by the Poincar\'e inequality. See, for example, \cite[Lemma~A.1]{Fuest} which states that there exists a constant $c > 0$ such that
\begin{align}\nonumber
\norm{\varphi - \bar{\varphi}}_{L^2}^2 \le c\norm{\nabla\varphi}_{L^2}^2, \quad \norm{\nabla\varphi}_{L^2}^2 \le c\norm{\Delta\varphi}_{L^2}^2 \quad \& \quad \norm{\Delta\varphi}_{L^2}^2 \le c \norm{\nabla\Delta\varphi}_{L^2}^2
\end{align}
for all $\varphi \in C^3(\bar \Omega)$.
\end{proof}

Finally, we combine several of the lemmata above to obtain an estimate required in the proof of our main result.
\begin{lemma}\label{lemmaone}
Let $\Omega \subset \R^3$ be a convex, smooth bounded domain and let $\vreps > 0$ and $M > 0$. Then there exists $C > 0$ such that for every $0 < \varphi \in C^2(\bar \Omega)$ with $\intom \varphi = M>0$ and $\psi \in C^3(\bar \Omega)$ that satisfy $\partial_\nu \varphi = \partial_\nu \psi = 0$ on $\partial \Omega$ we have
\begin{align} \nonumber
&\mathrel{\hphantom{=}} \int_{\Omega}|(\nabla \sqrt \varphi)^TD^2 \psi (\nabla \sqrt \varphi )| \\ &\le C\lr{\int_{\Omega}{\abs{\nabla \sqrt \varphi}^{2}}}^3 + C + \vreps\int_{\Omega}{\varphi \abs{D^2\log{\varphi}}^2} + \vreps\int_{\Omega}{\abs{\nabla\Delta \psi}^2}.
\label{notconvex}
\end{align}
\end{lemma}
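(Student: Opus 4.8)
The plan is to estimate the quadratic form pointwise, peel off $D^2\psi$ through its $L^6$ norm (controlled by Lemma~\ref{lemmatwo}), and then interpolate the resulting $L^{12/5}$ norm of $\nabla\sqrt\varphi$ between the Dirichlet energy $\int_\Omega|\nabla\sqrt\varphi|^2$ and the Fisher-type quantity $\int_\Omega\varphi|D^2\log\varphi|^2$, arranging the exponents so that the latter enters only to a power strictly below one and can therefore be absorbed with an arbitrarily small constant.

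Write $w=\sqrt\varphi$; since $\varphi\in C^2(\bar\Omega)$ is positive on the compact set $\bar\Omega$, we have $w\in C^2(\bar\Omega)$, $w>0$ and $\partial_\nu w=0$. Bounding the integrand by $|(\nabla w)^T D^2\psi\,\nabla w|\le|D^2\psi|\,|\nabla w|^2$ and applying Hölder with exponents $6$ and $6/5$ gives $\int_\Omega|D^2\psi|\,|\nabla w|^2\le\|D^2\psi\|_{L^6(\Omega)}\,\|\nabla w\|_{L^{12/5}(\Omega)}^2$. By Lemma~\ref{lemmatwo} the first factor is at most $C\|\nabla\Delta\psi\|_{L^2(\Omega)}$, and Young's inequality then produces $\vreps\|\nabla\Delta\psi\|_{L^2(\Omega)}^2+C_\vreps\|\nabla w\|_{L^{12/5}(\Omega)}^4$. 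The first summand is already admissible, so everything reduces to estimating $\|\nabla w\|_{L^{12/5}(\Omega)}^4$.

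For this I would invoke the three-dimensional Gagliardo--Nirenberg inequality $\|\nabla w\|_{L^{12/5}(\Omega)}\le C\|D^2 w\|_{L^2(\Omega)}^{1/4}\|\nabla w\|_{L^2(\Omega)}^{3/4}+C\|\nabla w\|_{L^2(\Omega)}$ (the exponent $1/4$ being forced by the scaling $\tfrac5{12}=\tfrac14\cdot\tfrac16+\tfrac34\cdot\tfrac12$), whence $\|\nabla w\|_{L^{12/5}(\Omega)}^4\le C\|D^2 w\|_{L^2(\Omega)}\|\nabla w\|_{L^2(\Omega)}^3+C\|\nabla w\|_{L^2(\Omega)}^4$. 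To control $\|D^2 w\|_{L^2(\Omega)}$ by the Fisher-type term I would use the pointwise identity $\varphi|D^2\log\varphi|^2=4\,|D^2 w-w^{-1}\nabla w\otimes\nabla w|^2$ (coming from $\log\varphi=2\log w$); combined with $|D^2 w|^2\le 2|D^2 w-w^{-1}\nabla w\otimes\nabla w|^2+2w^{-2}|\nabla w|^4$ and the elementary relation $\int_\Omega w^{-2}|\nabla w|^4=\tfrac1{16}\int_\Omega\varphi^{-3}|\nabla\varphi|^4$, Lemma~\ref{winkler} (with $n=3$) yields $\int_\Omega|D^2 w|^2\le C\int_\Omega\varphi|D^2\log\varphi|^2$. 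Hence $\|\nabla w\|_{L^{12/5}(\Omega)}^4\le C\big(\int_\Omega\varphi|D^2\log\varphi|^2\big)^{1/2}\|\nabla w\|_{L^2(\Omega)}^3+C\|\nabla w\|_{L^2(\Omega)}^4$.

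A final application of Young's inequality, $C\big(\int_\Omega\varphi|D^2\log\varphi|^2\big)^{1/2}\|\nabla w\|_{L^2(\Omega)}^3\le\delta\int_\Omega\varphi|D^2\log\varphi|^2+C_\delta(\int_\Omega|\nabla w|^2)^3$ together with $C\|\nabla w\|_{L^2(\Omega)}^4\le(\int_\Omega|\nabla w|^2)^3+C$, and the choice of $\delta$ making the net coefficient of $\int_\Omega\varphi|D^2\log\varphi|^2$ equal to $\vreps$ (which fixes $C_\vreps$ in terms of $\vreps$, $M$ and $\Omega$), delivers the asserted estimate \eqref{notconvex} after recalling $\int_\Omega|\nabla w|^2=\int_\Omega|\nabla\sqrt\varphi|^2$. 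The delicate point — and the main obstacle — is precisely to pin down the exponents so that $\int_\Omega\varphi|D^2\log\varphi|^2$ appears with the sub-unit power $1/2$ while the payment is only the cubic energy $(\int_\Omega|\nabla\sqrt\varphi|^2)^3$; the $L^2$-preserving scaling $w(\cdot)\mapsto\lambda^{3/2}w(\lambda\,\cdot)$, under which $\int_\Omega|\nabla\sqrt\varphi|^2\sim\lambda^2$ and $\int_\Omega\varphi|D^2\log\varphi|^2\sim\lambda^4$ whereas $\|\nabla w\|_{L^{12/5}(\Omega)}^4\sim\lambda^5$, is the right heuristic confirming that the cubic energy and the power $1/2$ are the matching choices. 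Notably, convexity of $\Omega$ plays no role in this estimate.
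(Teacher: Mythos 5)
Your proof is correct, but it routes the middle of the argument differently from the paper. Both arguments peel off $\|D^2\psi\|_{L^6(\Omega)}$ and control it via Lemma~\ref{lemmatwo}, and both ultimately rest on Winkler's inequality \eqref{winklerineq} plus Young's inequality; the difference is in how the factor $|\nabla\sqrt\varphi|^2$ is handled. The paper uses a four-factor H\"older estimate,
\[
\int_\Omega \bigl|(\nabla\sqrt\varphi)^T D^2\psi(\nabla\sqrt\varphi)\bigr| \le \tfrac12\,\|\nabla\sqrt\varphi\|_{L^2(\Omega)}\,\|D^2\psi\|_{L^6(\Omega)}\,\Bigl\|\tfrac{\nabla\varphi}{\varphi^{3/4}}\Bigr\|_{L^4(\Omega)}\,\|\varphi^{1/4}\|_{L^{12}(\Omega)},
\]
feeds the third factor directly into \eqref{winklerineq}, controls the fourth via $W^{1,2}(\Omega)\hookrightarrow L^6(\Omega)$ together with the normalization $\int_\Omega\varphi=M$, and finishes with a Young inequality with exponents $(4,4,2)$. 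You instead use the two-factor H\"older bound $\|D^2\psi\|_{L^6(\Omega)}\|\nabla\sqrt\varphi\|_{L^{12/5}(\Omega)}^2$, interpolate $\|\nabla\sqrt\varphi\|_{L^{12/5}(\Omega)}$ by Gagliardo--Nirenberg between $\|D^2\sqrt\varphi\|_{L^2(\Omega)}$ and $\|\nabla\sqrt\varphi\|_{L^2(\Omega)}$, and then bound $\int_\Omega|D^2\sqrt\varphi|^2\le C\int_\Omega\varphi|D^2\log\varphi|^2$ via the pointwise identity $\varphi|D^2\log\varphi|^2=4\bigl|D^2\sqrt\varphi-\nabla\sqrt\varphi\otimes\nabla\sqrt\varphi/\sqrt\varphi\bigr|^2$ and \eqref{winklerineq} --- this last inequality is precisely Lemma~\ref{cieslakinequality} of Appendix~\ref{s:bernis}, which you have in effect rederived and made the pivot of the proof. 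Your exponent bookkeeping (the exponent $1/4$ in Gagliardo--Nirenberg, the sub-unit power $1/2$ on the Fisher-type term, the cubic energy) is consistent, and your scaling heuristic checks out. What your route buys: the hypothesis $\int_\Omega\varphi=M$ is never used, so your constant is independent of $M$, a slight strengthening. What the paper's route buys: it avoids Gagliardo--Nirenberg interpolation altogether, using only H\"older, the Sobolev embedding and Young. You are also right that convexity of $\Omega$ plays no role; the same is true of the paper's proof, since convexity enters only later, through Lemma~\ref{gradBCEvans} in the proof of Lemma~\ref{mainestimate}. One cosmetic slip: you assert that your final constant depends on $M$, but in your argument it does not, since $M$ never appears.
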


\begin{proof}
By Hölder’s inequality, we have
\begin{align}\label{lemmaone:hoelder}
&\pe\int_{\Omega}|(\nabla\sqrt{\varphi})^T D^2 \psi(\nabla\sqrt{\varphi})| \nonumber \\
&\le \frac12 \int_{\Omega}{\abs{\nabla\sqrt{\varphi}}\abs{D^2 \psi}\frac{\abs{\nabla \varphi}}{\varphi^{3/4}}\varphi^{1/4}} \nonumber \\
&\le \frac12 \norm{\nabla \sqrt \varphi}_{L^2(\Omega)}\norm{D^2\psi}_{L^6(\Omega)}\norm{\frac{\nabla \varphi}{\varphi^\frac34}}_{L^4(\Omega)}\norm{\varphi^{1/4}}_{L^{12}(\Omega)}
\end{align}
for all $0 < \varphi \in C^2(\bar \Omega)$ and $\psi \in C^3(\bar \Omega)$.

Since $W^{1, 2}(\Omega) \hookrightarrow L^6(\Omega)$ and $\int_{\Omega}{\varphi} = M$ there is $c_1 > 0$ such that
\begin{align*}
\norm{\varphi^{1/4}}_{L^{12}(\Omega)} &= \norm{\varphi^{1/2}}^{1/2}_{L^6(\Omega)} \le c_1\norm{\varphi^{1/2}}^{1/2}_{W^{1,2}(\Omega)} \\
&\le c_1\norm{\nabla \varphi^{1/2}}^{1/2}_{L^2(\Omega)} + c_1\norm{\varphi^{1/2}}^{1/2}_{L^2(\Omega)} = c_1\lr{\norm{\nabla \sqrt \varphi}^{1/2}_{L^2(\Omega)} + M^{1/4}}
\end{align*}
for all $0 < \varphi \in C^2(\bar \Omega)$ with $\intom \varphi = M$.
In combination with \eqref{lemmaone:hoelder}, Lemma~\ref{lemmatwo}, Winkler's inequality \eqref{winklerineq}, the elementary estimate
\[
a(\sqrt{a} + \sqrt{b}) \le 2(a+b) \sqrt{a+b} \quad \text{for} \quad  a,b \ge 0
\]
and Young’s inequality, we see that by taking $a=\norm{\nabla \sqrt \varphi}_{L^2(\Omega)}$, $b=M^{1/2}$ and with some $c_2 > 0$ and $C >0$, we have
\begin{align*}
&\pe \int_{\Omega}|(\nabla \sqrt \varphi)^TD^2\psi\nabla(\sqrt \varphi)| \\
&\le c_2 \norm{\nabla \sqrt \varphi}_{L^2(\Omega)}
      \norm{\nabla \Delta \psi}_{L^2(\Omega)}
      \lr{\int_{\Omega}{\varphi\abs{D^2\log{\varphi}}^2}}^{1/4}
      \lr{\norm{\nabla \sqrt \varphi}^{1/2}_{L^2(\Omega)} + M^{1/4}} \\
&\le 2c_2 \lr{\norm{\nabla \sqrt \varphi}_{L^2} + M^{1/2}}^\frac32\lr{\int_{\Omega}{\varphi\abs{D^2\log{\varphi}}^2}}^{1/4} \norm{\nabla\Delta \psi}_{L^2} \\
&\le C\norm{\nabla \sqrt \varphi}^6_{L^2} + C + \vreps\int_{\Omega}{\varphi\abs{D^2\log{\varphi}}^2}
 + \vreps\norm{\nabla\Delta \psi}^2_{L^2}
\end{align*}
for all $0 < \varphi \in C^2(\bar \Omega)$ and $\psi \in C^3(\bar \Omega)$ with $\intom \varphi = M$ and $\partial_\nu \varphi = \partial_\nu \psi = 0$ on $\partial \Omega$.
\end{proof}

\section{Known properties of the solution}\label{sec:local_ex}
Next, we list some known properties of solutions to \eqref{CRu}--\eqref{IC} constructed in \cite{CLMR}.

\begin{lemma}\label{local_ex}
  Let
  \begin{align}\label{eq:local_ex:domain}
    \Omega \subset \R^n, n \in \N, \text{ be a smooth, bounded domain}
  \end{align}
  and let
  \begin{align}\label{eq:local_ex:init}
    u_0, v_0 \in W^{1, p}(\Omega) \text{ for some $p > n$} \quad \text{with} \quad 0 \not\equiv u_0 \ge 0 \text{ and } v_0 \ge 0 \text{ in $\Omega$}.
  \end{align}
  Then the system \eqref{CRu}--\eqref{IC} has a maximal unique classical solution
  \begin{align}\label{eq:local_ex:reg}
    (u, v) \in C^0([0, \tmax); \sobo1p) \cap C^\infty(\bar \Omega \times (0, \tmax))
  \end{align}
  and if $\tmax < \infty$, then $\limsup_{t \nearrow \tmax} (\|u(\cdot, t)\|_{L^\infty(\Omega)} + \|v(\cdot, t)\|_{L^\infty(\Omega)}) = \infty$.
  Moreover, $u$ and $v$ are positive in $\Ombar \times (0, \tmax)$.
\end{lemma}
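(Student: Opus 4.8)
Since the statement only records properties of the solution \emph{constructed in} \cite[Theorem~2.1]{CLMR}, the plan is to reproduce that construction via standard parabolic theory and then read off the listed properties. First I would recast \eqref{CRu}--\eqref{IC} as a fixed-point problem through the variation-of-constants formula: writing $(\mathrm{e}^{t\Delta})_{t \ge 0}$ for the Neumann heat semigroup on $\Omega$, a mild solution satisfies
\begin{align*}
u(t) &= \mathrm{e}^{t\Delta} u_0 + \int_0^t \mathrm{e}^{(t-s)\Delta} \nabla \cdot \lr{u\nabla v}(s) \, \d s, \\
v(t) &= \mathrm{e}^{t(\Delta - 1)} v_0 + \int_0^t \mathrm{e}^{(t-s)(\Delta-1)} u(s) \, \d s.
\end{align*}
As $p > n$ gives $\sobo1p \hookrightarrow C^0(\Ombar)$, the $L^q$--$L^r$ smoothing estimates for $\mathrm{e}^{t\Delta}$ and $\nabla \mathrm{e}^{t\Delta}$ let one control the cross-diffusion term $\nabla \cdot (u\nabla v)$, and a contraction-mapping argument on a ball in $C^0([0,T]; \sobo1p)^2$, for $T$ small depending only on $\norm{u_0}_{\sobo1p} + \norm{v_0}_{\sobo1p}$, produces a unique local mild solution; uniqueness on the joint interval of existence follows from the same Lipschitz bounds via Gronwall.

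Next I would bootstrap regularity. Feeding the solution back into the two equations and treating $\nabla \cdot (u\nabla v)$ and $-v + u$ as forcing terms, parabolic $L^q$ theory followed by Schauder estimates upgrades $(u,v)$ step by step to $C^\infty(\Ombar \times (0,\tmax))$, while the semigroup representation gives $(u,v) \in C^0([0,\tmax); \sobo1p)$; the smoothing is interior in time because at $t = 0$ the forcing is only as regular as the data. For the extensibility criterion I would use that the local existence time depends only on the $\sobo1p$ norm, so the solution continues as long as that norm stays finite. Arguing contrapositively, if $\tmax < \infty$ while $\norm{u(\cdot,t)}_{L^\infty(\Omega)} + \norm{v(\cdot,t)}_{L^\infty(\Omega)}$ stayed bounded on $[0,\tmax)$, then $L^q$ regularity for the second equation would bound $\nabla v$ in $L^\infty$, after which smoothing estimates for the first equation would bound the full $\sobo1p$ norms up to $\tmax$, contradicting maximality.

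Positivity then comes from the maximum principle applied to the already-constructed smooth solution. The first equation reads $\partial_t u = \Delta u + \nabla u \cdot \nabla v + u \Delta v$, a linear parabolic equation for $u$ with smooth, bounded coefficients; since $u_0 \ge 0$ with $u_0 \not\equiv 0$, the strong maximum principle together with the Hopf lemma for the no-flux condition yields $u > 0$ in $\Ombar \times (0,\tmax)$. Feeding $u \ge 0$ into $\partial_t v = \Delta v - v + u$ gives $v \ge 0$, and a second application of the strong maximum principle upgrades this to $v > 0$ in $\Ombar \times (0,\tmax)$.

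The main obstacle is the cross-diffusion term $\nabla \cdot (u\nabla v)$: it couples the two equations and effectively costs a derivative, so the delicate points are choosing the spaces and smoothing exponents in the contraction argument so that this term becomes a contraction, and then ensuring in the extensibility step that the mere $L^\infty$ bound really suffices to re-close the $\sobo1p$ estimate. All of this is standard once set up correctly, which is why the lemma can be attributed to \cite{CLMR}.
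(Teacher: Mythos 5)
Your final paragraph (strong maximum principle plus Hopf lemma for strict positivity) is essentially the paper's \emph{entire} proof: existence, uniqueness, regularity \eqref{eq:local_ex:reg}, the extensibility criterion and nonnegativity are all quoted wholesale from \cite[Theorem~2.1]{CLMR}, and the only thing the paper actually proves is positivity, by applying the Hopf lemma to each equation separately. One small point there: the paper is careful to note that $\Delta v$ is bounded only on $\Ombar \times [\tau, T]$ for $0 < \tau < T < \tmax$ (smoothness is interior in time), whereas you assert "smooth, bounded coefficients" on all of $(0,\tmax)$; this is harmless but should be phrased as the paper does. Everything before that in your proposal is a reconstruction of the cited theorem rather than a use of it, which is legitimate in principle, but it is where the genuine gap lies.

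The gap is concrete: the contraction mapping as you set it up does not close. For a generic pair $(u,v)$ in a ball of $C^0([0,T];\sobo1p)^2$ you control neither $D^2 v$ in $\leb p$ nor $\nabla v$ in $L^\infty$, so the product $u \nabla v$ lies only in $\leb p$, and the relevant estimate $\norm{\nabla \ure^{\tau \Delta} \nabla \cdot w}_{\leb p} \le C \tau^{-1} \norm{w}_{\leb p}$ produces a non-integrable singularity in the Duhamel integral for $\nabla u$. No choice of "smoothing exponents" repairs this, because the loss is one of regularity, not integrability: even $w \in L^\infty(\Omega)$ still costs $\tau^{-1}$. The standard fix is to exploit the triangular structure that your symmetric setup ignores: given $u \in C^0([0,T];\sobo1p)$, first define $v = v[u]$ by the second Duhamel formula, which gains $\norm{D^2 v[u](t)}_{\leb p} \lesssim t^{-1/2}$ with an integrable singularity; then $\nabla \cdot (u \nabla v[u]) = \nabla u \cdot \nabla v[u] + u \Delta v[u]$ lies in $\leb p$ with a $t^{-1/2}$ singularity, and one runs the contraction in $u$ alone (alternatively, and this is what \cite{CLMR} in effect rely on, one invokes Amann's theory of triangular quasilinear parabolic systems, which also delivers the extensibility criterion directly). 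The same non-integrable-singularity issue resurfaces in your extensibility step: from bounds on $u$ and $\nabla v$ in $L^\infty$ alone, "smoothing estimates for the first equation" do not re-close the $\sobo1p$ bound, since $\nabla \ure^{(t-s)\Delta}\nabla\cdot(u\nabla v)$ again costs $(t-s)^{-1}$; one needs intermediate control of $\Delta v$ (or Hölder bounds via parabolic regularity) so that the forcing can be split as $\nabla u \cdot \nabla v + u \Delta v$ and handled by a singular Gronwall argument.
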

\begin{proof}
  The existence of a $C^\infty(\bar \Omega \times (0, \tmax))$ solution that is nonnegative has been proved in \cite[Theorem~2.1]{CLMR}. Applying the Hopf lemma to each of the equations in \eqref{CRu} separately (in view of the regularity \eqref{eq:local_ex:reg}, $\Delta v$ is bounded on $\bar \Omega \times [\tau, T]$ for every $0 < \tau < T < \tmax$), due to zero Neumann boundary data \eqref{BC}, we arrive at the positivity of $u$ and $v$ for each $t \in (0, \tmax)$.
\end{proof}

As noted in \cite[(3)]{CLMR}, integrating both equations in \eqref{CRu} immediately ensures, that both solution components are uniformly in time bounded in $\leb1$.
\begin{lemma}\label{l1_bdd}
  Suppose the assumptions of Lemma~\ref{local_ex} hold. Then the solution $(u, v)$ of \eqref{CRu}--\eqref{IC} given by Lemma~\ref{local_ex} fulfills
  \begin{align*}
    \|u(\cdot, t)\|_{\leb1} &= \|u_0\|_{\leb1} \quad \text{and} \\
    \norm{v(\cdot, t)}_{L^1(\Omega)} &= \e^{-t}\lr{\norm{v_0}_{L^1(\Omega)} - \norm{u_0}_{L^1(\Omega)}} + \norm{u_0}_{L^1(\Omega)}
  \end{align*}
  for all $t \in (0, \tmax)$.
\end{lemma}

Moreover, \cite{CLMR} has identified a Lyapunov functional, which served as the main ingredient for solving the question of global existence the two-dimensional case.
\begin{lemma}\label{lyapunov}
 Under the assumptions of Lemma~\ref{local_ex} the solution  $(u,v)$ satisfies
  \begin{align}\label{LF}
  \frac{\d}{\d t}\lr{\int_\Omega{ u\log{u} } + \frac{1}{2}\int_\Omega{\abs{\nabla v}^2 }} = -\lr{\int_\Omega{\abs{\Delta v}^2 + \intom \abs{\nabla v}^2 + \intom \frac{\abs{\nabla u}^2}{u} }}
  \end{align}
  for all $t \in (0, \tmax)$. In particular,
  \begin{align}\label{I_intable}
    \int_{0}^{\tmax} \lr{\int_\Omega{\abs{\Delta v}^2 + \intom \abs{\nabla v}^2 + \intom \frac{\abs{\nabla u}^2}{u} }} < \infty.
  \end{align}
\end{lemma}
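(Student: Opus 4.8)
The plan is to differentiate the two pieces of the functional separately, using each equation in \eqref{CRu} to rewrite the resulting time derivatives, then integrate by parts and add. Throughout, the decisive structural point — which distinguishes the repulsive case from the attractive one — is a cancellation of cross terms; the analytic point is that the Neumann conditions \eqref{BC} together with the smoothness of $(u,v)$ on $\Ombar \times (0,\tmax)$ from Lemma~\ref{local_ex} render every integration by parts legitimate with all boundary contributions vanishing. It therefore suffices to establish \eqref{LF} pointwise for $t \in (0,\tmax)$ and afterwards integrate in time.

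First I would treat the entropy term. Testing the first equation against $\log u + 1$ and integrating by parts (the gradient of the constant dropping out) gives
\[
\frac{\d}{\d t}\intom u\log u = \intom(\log u + 1)\,\partial_t u = -\intom \frac{\nabla u}{u}\cdot\lr{\nabla u + u\nabla v} = -\intom \frac{\abs{\nabla u}^2}{u} - \intom \nabla u\cdot\nabla v.
\]
For the Dirichlet-energy term I would differentiate under the integral, integrate by parts once, and substitute the second equation:
\[
\frac12\frac{\d}{\d t}\intom \abs{\nabla v}^2 = -\intom \Delta v\,\partial_t v = -\intom \abs{\Delta v}^2 - \intom \abs{\nabla v}^2 + \intom \nabla u\cdot\nabla v,
\]
using $\intom v\,\Delta v = -\intom \abs{\nabla v}^2$ and $\intom u\,\Delta v = -\intom \nabla u\cdot\nabla v$. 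Adding the two identities, the cross terms $\mp\intom \nabla u\cdot\nabla v$ cancel exactly — this is precisely the effect of the repulsive sign in \eqref{CRu} — and \eqref{LF} follows.

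For the integrability statement, set $\mathcal L(t) \defs \intom u\log u + \frac12\intom \abs{\nabla v}^2$ and let $\mathcal D(t)$ denote the nonnegative quantity on the right of \eqref{I_intable}, so that \eqref{LF} reads $\mathcal L' = -\mathcal D \le 0$; in particular $\mathcal L$ is nonincreasing. Integrating over $[\vreps, T]$ with $0 < \vreps < T < \tmax$ yields $\int_\vreps^T \mathcal D = \mathcal L(\vreps) - \mathcal L(T)$. A lower bound on $\mathcal L(T)$ follows from $\frac12\intom \abs{\nabla v}^2 \ge 0$ together with the elementary pointwise inequality $s\log s \ge -\tfrac1\e$, which gives $\intom u\log u \ge -\tfrac{\abs{\Omega}}{\e}$. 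Hence $\int_\vreps^T \mathcal D \le \mathcal L(\vreps) + \tfrac{\abs{\Omega}}{\e}$; letting $\vreps \searrow 0$ and then $T \nearrow \tmax$ produces \eqref{I_intable}.

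The computations themselves are routine, so the only genuine care is needed at two places. The vanishing of the boundary terms is handled by \eqref{BC} and interior smoothness; I would expect the more delicate point to be the passage from the monotone functional to the finite time integral starting at $t=0$, which requires $\mathcal L(0)$ to be finite and $\mathcal L$ to be continuous up to $t=0$. Both follow from the regularity $(u,v) \in C^0([0,\tmax); W^{1,p}(\Omega))$ with $p > 3$ and the embedding $W^{1,p}(\Omega) \hookrightarrow L^\infty(\Omega)$, which keeps $\intom u_0\log u_0$ bounded and $\nabla v_0 \in L^2(\Omega)$.
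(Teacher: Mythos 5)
Your proof is correct, but it is worth noting that it is genuinely more self-contained than the paper's own argument. The paper does not derive the identity \eqref{LF} at all: it simply imports it from \cite[Lemma~2.2]{CLMR} and then observes that \eqref{I_intable} follows by integrating in time ``as the Lyapunov functional is bounded from below.'' You instead reprove the energy identity from scratch --- testing the first equation with $\log u + 1$, testing the second with $-\Delta v$, and exhibiting the exact cancellation of the cross terms $\mp\intom \nabla u \cdot \nabla v$, which is indeed the structural signature of the repulsive sign --- and all of your integrations by parts are legitimate thanks to the positivity and interior smoothness of $(u,v)$ from Lemma~\ref{local_ex} together with the Neumann conditions \eqref{BC}. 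Your treatment of the integrability statement is also more careful than the paper's one-line remark: you make explicit both the lower bound $\intom u \log u \ge -\abs{\Omega}/\ure$ (plus nonnegativity of the Dirichlet term), which is exactly what ``bounded from below'' means here, and the passage $\vreps \searrow 0$, which requires finiteness and continuity of the functional at $t = 0$; the latter point is genuinely needed, since \eqref{LF} is only asserted on the open interval $(0,\tmax)$, and your justification via $(u,v) \in C^0([0,\tmax); W^{1,p}(\Omega))$ with $W^{1,p}(\Omega) \hookrightarrow L^\infty(\Omega)$ (valid for any $p > n$, not just $n = 3$) is the right one. In short: the paper's proof buys brevity by citation; yours buys a complete, verifiable argument at the cost of a page, with no gaps.
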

\begin{proof}
  The differential inequality \eqref{LF} is entailed in \cite[Lemma~2.2]{CLMR}, upon which \eqref{I_intable} results by an integration in time as the Lyapunov functional is bounded from below.
\end{proof}%

\section{The main estimate}\label{section_main}
This section contains our main contribution, a calculation of the evolution of the Fisher information along the trajectories of \eqref{CRu}--\eqref{IC}. It is related to the Bakry-\'Emery calculation, see \cite{BE},  applied however to a system of equations.

Throughout this section, we fix a domain and initial data fulfilling \eqref{eq:local_ex:domain} and \eqref{eq:local_ex:init} as well as the solution $(u, v)$ of \eqref{CRu}--\eqref{IC}, with maximal existence time $\tmax$  given by Lemma~\ref{local_ex}. Moreover, we denote
\begin{align}
\frac{\d}{\d t}\lr{\int_\Omega{{\abs{\Delta v}^2 + \intom \abs{\nabla v}^2 + \intom \frac{\abs{\nabla u}^2}{u}} }} \label{LF2}
&\sfed \frac{\d}{\d t}I(t).
\end{align}
Our aim is to obtain an estimate of $I$. Notice that $I$ is an extended version of the Fisher information. Indeed, in the case of a single heat equation, the quantity $\int_\Omega \frac{ |\nabla u|^2}{u}$ is called Fisher's information.
The following remark explains our strategy.
\begin{remark}\label{rem}
We note that the inequality $\dot{I} \le cI^2$ would imply boundedness of $I$ due to \eqref{I_intable}. Indeed, using Ladyzhenskaya's trick (see \cite{Lady}) we would have
\[ \frac{\d}{\d t}\lr{I(t)\e^{-\int_{0}^{t}{cI(s)\, \d s}}} \le 0. \]
\end{remark}

Below we formulate and prove our main contribution. It extends the calculation controlling the evolution of the Fisher information to the case of a system of equations.

\begin{lemma}\label{mainestimate}
For all $t \in (0, \tmax)$, the estimate
\begin{align*} \nonumber
\dot{I}(t) &\le -2\int_{\Omega}{u\abs{D^2\log{u}}^2} + 8\int_\Omega{\lr{\nabla \sqrt u}^TD^2v\lr{\nabla \sqrt u}} \\ 
&\mathrel{\hphantom{\le}} -2 \intom |\nabla \Delta v|^2 - 4 \intom |\Delta v|^2 - 2 \intom |\nabla v|^2 + 2\int_{\Omega}{\nabla u\cdot\nabla v}
\end{align*}
holds.
\end{lemma}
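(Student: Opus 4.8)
The plan is to write $I(t)=A+B+C$ with $A=\int_\Omega|\Delta v|^2$, $B=\int_\Omega|\nabla v|^2$ and the Fisher-information term $C=\int_\Omega\frac{|\nabla u|^2}{u}=\int_\Omega u|\nabla\log u|^2$, to differentiate each summand in time, and to add the results. Since Lemma~\ref{local_ex} guarantees that $(u,v)$ is smooth up to the boundary and that $u>0$, every integration by parts below is justified and $\log u$ is well defined.

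The two terms built from $v$ are the easier ones. For $B$, using $\partial_\nu v=0$ and $\partial_t v=\Delta v-v+u$, a single integration by parts gives $\dot B=-2\int_\Omega\Delta v\,\partial_t v=-2\int_\Omega|\Delta v|^2-2\int_\Omega|\nabla v|^2+2\int_\Omega\nabla u\cdot\nabla v$, which already produces three of the target terms. For $A$ I would write $\Delta\partial_t v=\Delta^2v-\Delta v+\Delta u$, so that $\dot A=2\int_\Omega\Delta v\,\Delta^2v-2\int_\Omega|\Delta v|^2+2\int_\Omega\Delta u\,\Delta v$. The crucial boundary observation is that $\partial_\nu\Delta v=\partial_\nu(\partial_t v+v-u)=0$ on $\partial\Omega$ (differentiate $\partial_\nu v\equiv0$ in time and use $\partial_\nu u=\partial_\nu v=0$), so the biharmonic term integrates by parts cleanly to $-2\int_\Omega|\nabla\Delta v|^2$. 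Thus $\dot A+\dot B$ reproduces every target term except the two Hessian terms, leaving behind the single cross term $2\int_\Omega\Delta u\,\Delta v$.

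The heart of the matter is $\dot C$, where the Bakry--\'Emery idea enters. I would rewrite the first equation as $\partial_t u=\nabla\cdot(u\nabla\Phi)$ with $\Phi=\log u+v$, compute after one integration by parts that $\dot C=-\int_\Omega\partial_t u\,(|\nabla\log u|^2+2\Delta\log u)$, and then substitute $\partial_t u=\nabla\cdot(u\nabla\Phi)$ and integrate by parts again; here all boundary terms vanish because $\partial_\nu\Phi=\partial_\nu\log u+\partial_\nu v=0$. This yields $\dot C=2\int_\Omega u(\nabla\Phi)^TD^2(\log u)\,\nabla\log u+2\int_\Omega u\,\nabla\Phi\cdot\nabla\Delta\log u$. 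Splitting $\nabla\Phi=\nabla\log u+\nabla v$, the pure $\log u$ part is treated as in the classical Fisher-information computation: Bochner's formula (Proposition~\ref{Bochner}) turns $\nabla\log u\cdot\nabla\Delta\log u$ into $\tfrac12\Delta|\nabla\log u|^2-|D^2\log u|^2$, and after integrating by parts the only surviving boundary integral is $\int_{\partial\Omega}u\,\partial_\nu|\nabla\log u|^2$, which is $\le0$ by convexity of $\Omega$ (Lemma~\ref{gradBCEvans}) together with $u>0$. Discarding it with the correct sign leaves exactly $-2\int_\Omega u|D^2\log u|^2$.

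It then remains to show that the three leftover cross terms — namely $2\int_\Omega\Delta u\,\Delta v$ from $\dot A$ together with the two $v$-containing pieces $2\int_\Omega u(\nabla v)^TD^2(\log u)\,\nabla\log u$ and $2\int_\Omega u\,\nabla v\cdot\nabla\Delta\log u$ from $\dot C$ — sum to exactly $8\int_\Omega(\nabla\sqrt u)^TD^2v\,\nabla\sqrt u$, via the identity $\nabla\sqrt u=\tfrac12\sqrt u\,\nabla\log u$ (equivalently $u(\nabla\log u)^TM\nabla\log u=4(\nabla\sqrt u)^TM\nabla\sqrt u$ for any symmetric $M$). I expect this bookkeeping to be the \emph{main obstacle}: it is a purely integration-by-parts identity, but it requires repeatedly transferring one derivative between $\log u$ and $v$, using $D^2u=u(D^2\log u+\nabla\log u\otimes\nabla\log u)$ and $\Delta u=u(\Delta\log u+|\nabla\log u|^2)$, and carefully cancelling the mixed terms $\int_\Omega u(\nabla\log u\cdot\nabla v)\Delta\log u$ and the cubic terms $\int_\Omega u(\nabla v\cdot\nabla\log u)|\nabla\log u|^2$ that appear along the way. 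Each boundary contribution in this step vanishes thanks to $\partial_\nu u=\partial_\nu v=0$, so no geometric input beyond the convexity already used in the Fisher-information step is needed. Summing $\dot A+\dot B+\dot C$ and inserting this identity yields the asserted estimate for $\dot I(t)$.
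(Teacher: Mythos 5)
Your argument is correct, but it follows a genuinely different route from the paper's. The paper never differentiates $\int_\Omega|\Delta v|^2$ head-on: before integrating by parts it substitutes $\Delta v=v_t+v-u$ from the second equation, so no biharmonic term arises and no boundary condition for $\Delta v$ is ever needed; the price is that $\nabla v_t$ terms are carried along, and the dissipative block $-2\int_\Omega|\nabla\Delta v|^2-4\int_\Omega|\Delta v|^2-2\int_\Omega|\nabla v|^2$ emerges only at the very end, by completing the square $-2\int_\Omega|\nabla(v_t-u)|^2$ and converting back through the PDE. You instead obtain that block immediately from $\dot A+\dot B$, at the cost of the extra boundary identity $\partial_\nu\Delta v=\partial_\nu(v_t+v-u)=0$ on $\partial\Omega$, which is indeed valid because $\partial_\nu v\equiv 0$ on $\partial\Omega\times(0,\tmax)$ may be differentiated in time thanks to the smoothness guaranteed by Lemma~\ref{local_ex}. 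Similarly, the paper runs the Fisher-information step on $\varrho=\sqrt u$ (Bochner and Lemma~\ref{gradBCEvans} applied to $\varrho$), whereas you run it on $\log u$ in the genuine Bakry--\'Emery form $u_t=\nabla\cdot(u\nabla\Phi)$ with $\Phi=\log u+v$; both invocations of convexity are legitimate, since $\partial_\nu\sqrt u=\partial_\nu\log u=0$. Finally, the identity you flag as the ``main obstacle'' does hold, and by exactly the manipulations you describe: writing $w=\log u$, so that $\nabla u=u\nabla w$ and $\Delta u=u(\Delta w+|\nabla w|^2)$, integration by parts (all boundary terms vanish because $\partial_\nu w=\partial_\nu v=0$) reduces both your three leftover cross terms and the target $8\int_\Omega(\nabla\sqrt u)^TD^2v\,\nabla\sqrt u=2\int_\Omega u\,(\nabla w)^TD^2v\,\nabla w$ to the same expression,
\begin{equation*}
-2\int_\Omega u\,|\nabla w|^2(\nabla w\cdot\nabla v)
-2\int_\Omega u\,(\nabla v)^TD^2w\,\nabla w
-2\int_\Omega u\,\Delta w\,(\nabla w\cdot\nabla v),
\end{equation*}
so your proof closes. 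In summary: your route keeps the Bakry--\'Emery structure transparent and delivers the dissipative terms at once, while the paper's substitution trick avoids fourth-order operators, boundary conditions on $\Delta v$, and the cross-term bookkeeping altogether.
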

\begin{proof}
We notice
\begin{align*}
\frac{\d}{\d t}\int_\Omega{\abs{\Delta v}^2} = 2\int_\Omega{\Delta{v}_t\Delta v} \quad \text{in}\quad (0, \tmax).
\end{align*}
From the second equation in $(\ref{CRu})$ we can substitute $\Delta v = v_t + v - u$ (equivalently, we can take the inner product of the second equation in $(\ref{CRu})$ with $\Delta v_t$), to get
\begin{align} \nonumber
\frac{\d}{\d t}\int_\Omega{\abs{\Delta v}^2} &= 2{\int_\Omega{\Delta v_t\lr{v_t + v - u}}} = -2{\int_\Omega{\nabla v_t \cdot \lr{\nabla v_t + \nabla v - \nabla u}}} \\ \label{lapderiv}
&= -2{\int_\Omega{\abs{\nabla v_t}^2}} - \frac{\d}{\d t}{\int_\Omega{\abs{\nabla v}^2}} + 2{\int_\Omega{\nabla v_t \cdot \nabla u}} \quad \text{in}\quad (0, \tmax).
\end{align}
From $(\ref{lapderiv})$ we get
\begin{align} \label{partderiv}
\frac{\d}{\d t}\lr{\int_\Omega{{\abs{\Delta v}^2 + \abs{\nabla v}^2}}} = -2{\int_\Omega{\abs{\nabla v_t}^2}} + 2{\int_\Omega{\nabla v_t \cdot \nabla u}} \quad \text{in}\quad  (0, \tmax)
\end{align}
and from $(\ref{LF2})$ and $(\ref{partderiv})$ we obtain 
\begin{align} \label{mainineq}
\dot{I}(t) = -2{\int_\Omega{\abs{\nabla v_t}^2}} + 2{\int_\Omega{\nabla v_t \cdot \nabla u}} + \frac{\d}{\d t}\int_\Omega{\frac{\abs{\nabla u}^2}{u}} \quad \text{for all}\quad t \in (0, \tmax).
\end{align}
Next, we compute the last term on the right-hand-side,
\begin{align} \label{FI}
\frac{\d}{\d t}\int_\Omega{\frac{\abs{\nabla u}^2}{u}} = 4\frac{\d}{\d t}\int_\Omega{\abs{\nabla{\sqrt{u}}}^2} = 8\int_\Omega{\nabla{\vro_t}\cdot\nabla{\vro}} \quad \text{in}\quad (0, \tmax),
\end{align}
where we applied the substitution $\vro \defs \sqrt{u}$.
From the first equation in $(\ref{CRu})$ we have
\begin{align} \label{rhot2}
\varrho_t &= \frac{u_t}{2\sqrt{u}} = \frac{\Delta u + \nabla u\cdot\nabla v + u\Delta v}{2\sqrt{u}} = \frac{\Delta u}{2\sqrt{u}} + \nabla\vro\cdot\nabla v + \frac{1}{2}\vro\Delta v,
\end{align}
where
\begin{align} \label{Lapvro}
\Delta\vro &= \div\lr{\nabla\vro} = \div\lr{\frac{\nabla u}{2\sqrt{u}}} = \frac{\Delta u}{2\sqrt{u}} - \frac{\abs{\nabla u}^2}{4 u^{3/2}} = \frac{\Delta u}{2\sqrt{u}} - \frac{\abs{\nabla\vro}^2}{\vro}
\end{align}
in $\Omega \times (0, \tmax)$.
So, by plugging $(\ref{Lapvro})$ into $(\ref{rhot2})$, we find
\begin{align} \nonumber
\varrho_t &= \Delta\vro + \frac{\abs{\nabla\vro}^2}{\vro} + \nabla\vro\cdot\nabla v + \frac{1}{2}\vro\Delta v \quad \text{in} \quad \Omega \times (0, \tmax).
\end{align}
Hence, $(\ref{FI})$ becomes
\begin{align} \nonumber
\frac{\d}{\d t}\int_\Omega{\frac{\abs{\nabla u}^2}{u}} &= 8\int_\Omega{\nabla{\vro}\cdot\nabla\lr{\Delta\vro + \frac{\abs{\nabla\vro}^2}{\vro} + \nabla\vro\cdot\nabla v + \frac{1}{2}\vro\Delta v}} \\ \nonumber
&=8\lra{\int_\Omega{\nabla\vro\cdot\nabla\lr{\Delta\vro}} + \int_\Omega{2\frac{\lr{\nabla\vro}^TD^2\vro\lr{\nabla\vro}}{\vro}} - \int_\Omega{\frac{\abs{\nabla\vro}^4}{\vro^2}}} \\ \nonumber
&\mathrel{\hphantom{=}}+ 8\lra{\int_\Omega{\lr{\nabla\vro}^TD^2\vro\lr{\nabla v}} + \int_\Omega{\lr{\nabla\vro}^TD^2v\lr{\nabla\vro}}} \\ \label{FI2}
&\mathrel{\hphantom{=}}+ 4\int_{\Omega}{\nabla\lr{\vro\Delta v}\cdot\nabla\vro} \quad \text{in} \quad (0, \tmax).
\end{align}
Due to the Bochner formula $(\ref{bochnerformula})$,
\[ \nabla\vro\cdot\nabla\lr{\Delta\vro} = -\abs{D^2\vro}^2 + \frac{1}{2}\Delta\lr{\abs{\nabla\vro}^2} \quad \text{in} \quad \Omega \times (0, \tmax),
\]
we get
\begin{align*}
&\mathrel{\hphantom{=}} \int_\Omega{\nabla\vro\cdot\nabla\lr{\Delta\vro}} + \int_\Omega{2\frac{\lr{\nabla\vro}^TD^2\vro\lr{\nabla\vro}}{\vro}} - \int_\Omega{\frac{\abs{\nabla\vro}^4}{\vro^2}} \\
&= -\int_{\Omega}{\abs{D^2\vro - \frac{\nabla\vro\otimes\nabla\vro}{\vro}}^2}
+ \frac{1}{2}\int_{\Omega}{\Delta\lr{\abs{\nabla\vro}^2}}
\quad \text{in} \quad (0, \tmax).
\end{align*}
We note that the boundary condition $\left.\frac{\partial u}{\partial \nu}\right\rvert_{\partial\Omega} = 0$ implies that
\[
\left.\frac{\partial\vro}{\partial \nu}\right\rvert_{\partial\Omega} = \left.\frac{\frac{\partial u}{\partial \nu}}{2\sqrt{u}}\right\rvert_{\partial\Omega} = 0,
\]
so that an integration by parts and an application of Lemma~\ref{gradBCEvans}, which is possible thanks to the convexity of the domain $\Omega$, yield
\[
\int_{\Omega}{\Delta\lr{\abs{\nabla\vro}^2}} = \int_{\partial\Omega}{\frac{\partial\lr{\abs{\nabla\vro}^2}}{\partial \nu}}
\le 0
\quad \text{in} \quad (0, \tmax).
\]
Plugging the above into $(\ref{FI2})$, we obtain
\begin{align} \nonumber
\frac{\d}{\d t}\int_\Omega{\frac{\abs{\nabla u}^2}{u}} &\le -2\int_{\Omega}{u\abs{D^2\log{u}}^2} \\ \nonumber
&+ 8\lra{\int_\Omega{\lr{\nabla\vro}^TD^2\vro\lr{\nabla v}} + \int_\Omega{\lr{\nabla\vro}^TD^2v\lr{\nabla\vro}}} \\ \label{FI3}
&+ 4\int_{\Omega}{\nabla\lr{\vro\Delta v}\cdot\nabla\vro}
\quad \text{in} \quad (0, \tmax),
\end{align}
where we also used the relations
\[ \int_{\Omega}{\abs{D^2\vro - \frac{\nabla\vro\otimes\nabla\vro}{\vro}}^2} = \int_{\Omega}{\vro^2\abs{D^2\log{\vro}}^2} = \frac14 \intom u \abs{D^2 \log u}^2\]
in the first term on the right-hand side.

We now focus on the last term in $(\ref{FI3})$,
\begin{align} \label{gradLapv}
4\int_{\Omega}{\nabla\lr{\vro\Delta v}\cdot\nabla\vro} = 4\int_{\Omega}{\abs{\nabla\vro}^2\Delta v} + 4\int_{\Omega}{\vro\nabla\vro\cdot\nabla\lr{\Delta v}}.
\end{align}
Integration by parts yields
\begin{equation}  \label{D2vro}
4\int_{\Omega}{\abs{\nabla\vro}^2\Delta v} = -4\int_{\Omega}{\nabla\lr{\abs{\nabla\vro}^2}\cdot\nabla v}= -8\int_\Omega{\lr{\nabla\vro}^TD^2\vro\lr{\nabla v}}
\end{equation}
in $(0, \tmax)$.
For the second term in $(\ref{gradLapv})$ we substitute $\Delta v = v_t + v - u$ from the second equation in $(\ref{CRu})$ to obtain
\begin{align} \nonumber
&\mathrel{\hphantom{=}}4\int_{\Omega}{\vro\nabla\vro\cdot\nabla\lr{\Delta v}} = 2\int_{\Omega}{\nabla\vro^2\cdot\nabla\lr{v_ t + v - u}} \\ \label{gradLapv2}
&= 2\int_{\Omega}{\nabla u \cdot\nabla v_t} + 2\int_{\Omega}{\nabla u\cdot\nabla v} - 2\int_{\Omega}{\abs{\nabla u}^2}
\quad \text{in} \quad (0, \tmax).
\end{align}
Inserting $(\ref{D2vro})$ and $(\ref{gradLapv2})$ in $(\ref{FI3})$ gives
\begin{align} \nonumber
\frac{\d}{\d t}\int_\Omega{\frac{\abs{\nabla u}^2}{u}} &\le -2\int_{\Omega}{u\abs{D^2\log{u}}^2} + 8\int_\Omega{\lr{\nabla\vro}^TD^2v\lr{\nabla\vro}} \\ \nonumber
&\mathrel{\hphantom{=}}+ 2\int_{\Omega}{\nabla u \cdot\nabla v_t} + 2\int_{\Omega}{\nabla u\cdot\nabla v} - 2\int_{\Omega}{\abs{\nabla u}^2}
\quad \text{in} \quad (0, \tmax).
\end{align}
Therefore, going back to $(\ref{mainineq})$, we have
\begin{align} \nonumber
\dot{I}(t) &\le -2\int_{\Omega}{u\abs{D^2\log{u}}^2} + 8\int_\Omega{\lr{\nabla \sqrt u}^TD^2v\lr{\nabla \sqrt u}} \\ \nonumber
&\mathrel{\hphantom{=}}- 2{\int_\Omega{\abs{\nabla v_t}^2}} + 4{\int_\Omega{\nabla v_t \cdot \nabla u}} + 2\int_{\Omega}{\nabla u\cdot\nabla v} - 2\int_{\Omega}{\abs{\nabla u}^2}
\end{align}
for all $t \in (0, \tmax)$.
Since
\begin{align*}
  &\mathrel{\hphantom{=}}  - 2{\int_\Omega{\abs{\nabla v_t}^2}} + 4{\int_\Omega{\nabla v_t \cdot \nabla u}} -  2\int_{\Omega}{\abs{\nabla u}^2} \\
  &=  -2 \intom |\nabla (v_t - u)|^2
   =  -2 \intom |\nabla (\Delta v - v)|^2 \\
  &=  -2 \intom |\nabla \Delta v|^2
      -4 \intom |\Delta v|^2
      -2 \intom |\nabla v|^2
  \quad \text{in} \quad (0, \tmax),
\end{align*}
we obtain the desired estimate.
\end{proof}

Next, we simplify the previous differential inequality, which will allow us to argue in a more straightforward manner in the sequel.
\begin{lemma}\label{mainest2}
  Throughout $(0, \tmax)$, it holds that
  \begin{align*}
   &\mathrel{\hphantom{=}} \frac{\d}{\d t} \left( 4 \int_\Omega |\nabla \sqrt u|^2 + \int_\Omega |\Delta v|^2 \right) \\
    &\le  - 2 \int_\Omega u |D^2 \log u|^2
          - 2 \int_\Omega |\nabla \Delta v|^2
          - 2 \int_\Omega |\Delta v|^2 \\
    &\mathrel{\hphantom{=}}     + 8 \int_\Omega (\nabla \sqrt u)^T D^2 v \nabla \sqrt u.
  \end{align*}
\end{lemma}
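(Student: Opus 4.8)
The plan is to obtain Lemma~\ref{mainest2} directly from Lemma~\ref{mainestimate} by subtracting the evolution of $\int_\Omega |\nabla v|^2$. First I would observe that, since $4|\nabla\sqrt u|^2 = \frac{|\nabla u|^2}{u}$ pointwise, the definition \eqref{LF2} of $I$ gives
\[
4\int_\Omega |\nabla \sqrt u|^2 + \int_\Omega |\Delta v|^2 = I(t) - \int_\Omega |\nabla v|^2,
\]
so the left-hand side of the asserted inequality equals $\dot I(t) - \frac{\d}{\d t}\int_\Omega |\nabla v|^2$. Thus the whole task reduces to computing the evolution of $\int_\Omega |\nabla v|^2$ and combining it with the bound for $\dot I$ already provided by Lemma~\ref{mainestimate}.

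The only genuine computation is this evolution. Using $v_t = \Delta v - v + u$ from the second equation in \eqref{CRu} and integrating by parts (the boundary terms vanish by the Neumann condition \eqref{BC}), I would compute
\begin{align*}
\frac{\d}{\d t}\int_\Omega |\nabla v|^2
&= 2\int_\Omega \nabla v\cdot\nabla v_t
 = -2\int_\Omega \Delta v\,(\Delta v - v + u) \\
&= -2\int_\Omega |\Delta v|^2 - 2\int_\Omega |\nabla v|^2 + 2\int_\Omega \nabla u\cdot\nabla v,
\end{align*}
where in the last step I also used $\int_\Omega v\Delta v = -\int_\Omega |\nabla v|^2$ and $\int_\Omega u\Delta v = -\int_\Omega \nabla u\cdot\nabla v$, again by parts.

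Subtracting this identity from the estimate in Lemma~\ref{mainestimate} finishes the proof. The key point is that the subtraction is engineered to produce exact cancellations: the terms $-2\int_\Omega|\nabla v|^2$ and $+2\int_\Omega\nabla u\cdot\nabla v$ from Lemma~\ref{mainestimate} are annihilated by their counterparts, while the two $|\Delta v|^2$ contributions combine to $-4+2=-2$, leaving precisely $-2\int_\Omega |\Delta v|^2$. The surviving terms $-2\int_\Omega u|D^2\log u|^2$, $-2\int_\Omega |\nabla\Delta v|^2$ and $8\int_\Omega (\nabla\sqrt u)^T D^2 v\,\nabla\sqrt u$ are carried over unchanged, giving the claimed inequality. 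I do not expect any real obstacle here: the statement is pure bookkeeping once the evolution of $\int_\Omega|\nabla v|^2$ is in hand, and the only points requiring care are the sign tracking in the cancellations and the vanishing of the boundary integrals, which is guaranteed by \eqref{BC}.
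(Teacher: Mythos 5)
Your proof is correct and takes essentially the same route as the paper: both compute $\frac{\d}{\d t}\int_\Omega|\nabla v|^2 = -2\int_\Omega|\Delta v|^2 - 2\int_\Omega|\nabla v|^2 + 2\int_\Omega\nabla u\cdot\nabla v$ from the second equation in \eqref{CRu} via integration by parts, and then subtract this identity from the estimate of Lemma~\ref{mainestimate}, with the cancellations you describe. Incidentally, your displayed identity corrects a typo in the paper's own proof, which writes the last term as $+2\int_\Omega \nabla v\cdot\nabla v$ where $+2\int_\Omega\nabla u\cdot\nabla v$ is clearly intended.
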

\begin{proof}
  This follows immediately from Lemma~\ref{mainestimate} and the fact that
  \begin{align*}
        \frac{\d}{\d t} \int_\Omega |\nabla v|^2
    &=  - 2 \int_\Omega \Delta v v_t
     =  - 2 \int_\Omega |\Delta v|^2
        - 2 \int_\Omega |\nabla v|^2
        + 2 \int_\Omega \nabla v \cdot \nabla v
  \end{align*}
  in $(0, \tmax)$.
\end{proof}

\section{Proof of the main theorem}\label{fourth}
We are now in a position to utilize our calculation from Lemma~\ref{mainestimate} and complete the proof of the announced result.
As in the previous section, we fix a domain $\Omega$ and initial data $u_0, v_0$ satisfying \eqref{eq:local_ex:domain} and \eqref{eq:local_ex:init} as well as the solution $(u, v)$ of \eqref{CRu}--\eqref{IC} given by Lemma~\ref{local_ex}.
Moreover, as the solution is unique by Lemma~\ref{local_ex}, $\tmax$ is infinite if and only if the solution with initial data $(u(\cdot, t_0), v(\cdot, t_0))$ for some $t_0 \in (0, \tmax)$ exists globally.
Thus, by switching to the solution with these initial data and recalling \eqref{eq:local_ex:reg}, we may assume $u, v \in C^\infty(\Ombar \times [0, \tmax))$.

The following lemma is the first step in a bootstrapping procedure yielding the required regularity of the solution.
\begin{lemma}\label{u_l3}
  Suppose $n=3$ and that $\Omega$ is convex. Let $T \in (0, \tmax] \cap (0, \infty)$ and suppose that $\nabla \sqrt{u} \in L^4(0, T; L^2(\Omega))$.
  Then there is $C > 0$ such that
  \begin{align*}
    \intom u^3(\cdot, t) \le C \qquad \text{for all}\quad  t \in (0, T).
  \end{align*}
\end{lemma}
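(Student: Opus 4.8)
The plan is to reduce the assertion to a time-uniform bound on the quantity
\[
J(t) \defs 4\intom \abs{\nabla\sqrt u}^2 + \intom\abs{\Delta v}^2
\]
and then to obtain such a bound from the differential inequality of Lemma~\ref{mainest2} via a Gronwall argument. For the reduction I would invoke the Sobolev embedding $W^{1,2}(\Omega)\hookrightarrow L^6(\Omega)$ (available since $n=3$) together with the mass identity $\intom u = \norm{u_0}_{\leb1}\sfed M$ from Lemma~\ref{l1_bdd}. Since $\intom u^3 = \norm{\sqrt u}_{L^6(\Omega)}^6$, raising the embedding to the sixth power gives
\[
\intom u^3 \le c\lr{\intom\abs{\nabla\sqrt u}^2 + M}^3 \le c\lr{\tfrac14 J(t) + M}^3,
\]
so a bound on $J$ over $(0,T)$ yields the claim.

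To bound $J$, I would start from Lemma~\ref{mainest2} and control its only positive contribution $8\intom(\nabla\sqrt u)^T D^2 v\,\nabla\sqrt u$ by means of Lemma~\ref{lemmaone} applied with $\varphi = u$ and $\psi = v$; this is legitimate because $u>0$, $\intom u = M$ is fixed in time, and $\partial_\nu u = \partial_\nu v = 0$. Choosing the free parameter there small enough, say $\vreps = \tfrac14$, the two terms $\vreps\intom u\abs{D^2\log u}^2$ and $\vreps\intom\abs{\nabla\Delta v}^2$ it produces are absorbed by the negative terms $-2\intom u\abs{D^2\log u}^2$ and $-2\intom\abs{\nabla\Delta v}^2$ already present in Lemma~\ref{mainest2}. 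Discarding the remaining non-positive terms and abbreviating $f(t)\defs\intom\abs{\nabla\sqrt u}^2$, I obtain an inequality of the form
\[
\dot J(t) \le c_1 f(t)^3 + c_2 \qquad\text{on } (0,T),
\]
with $c_1,c_2$ independent of $t$ (here it matters that the constant in Lemma~\ref{lemmaone} depends only on $\Omega$, $\vreps$ and the fixed mass $M$).

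The decisive step, and the only one using the hypothesis $\nabla\sqrt u\in L^4(0,T;L^2(\Omega))$, is to linearise the cubic term. Since $J \ge 4f$, I would split $f^3 = f^2\cdot f \le \tfrac14 f^2 J$, which turns the inequality into
\[
\dot J(t) \le \tfrac{c_1}4 f(t)^2\, J(t) + c_2.
\]
The hypothesis says precisely that $f\in L^2(0,T)$, hence $f^2\in L^1(0,T)$ is an integrable Gronwall weight; moreover the time-shift carried out at the start of the section ensures $J(0)<\infty$. Multiplying by the integrating factor $\exp\!\lr{-\tfrac{c_1}4\int_0^t f^2\ds}$, exactly as in Remark~\ref{rem}, and integrating gives
\[
J(t) \le \lr{J(0)+c_2 T}\exp\lr{\tfrac{c_1}4\int_0^T f^2\ds} \qquad\text{for all } t\in(0,T),
\]
a finite bound; with the reduction of the first paragraph this proves the lemma.

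I expect the main obstacle to be precisely the superlinearity $c_1 f^3$: a direct integration of $\dot J\le c_1 f^3 + c_2$ would demand $f\in L^3(0,T)$, which the hypothesis does not furnish, while the crude bound $f^3\le\tfrac1{64}J^3$ only produces a Riccati-type inequality that may blow up in finite time. The asymmetric split $f^3=f^2\cdot f$ circumvents this by letting one factor be absorbed by the unknown $J$ while the remaining two supply the integrable-in-time weight $f^2$ — the $L^4$-in-time counterpart of the Ladyzhenskaya device recalled in Remark~\ref{rem}.
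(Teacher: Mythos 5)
Your proposal is correct and follows essentially the same route as the paper's own proof: the same functional $J$, the same application of Lemma~\ref{lemmaone} with $\varphi = u$, $\psi = v$, $\vreps = \tfrac14$ absorbed into the negative terms of Lemma~\ref{mainest2}, the same linearisation of the cubic term via $f^3 \le f^2 J$, and the same integrating-factor (Ladyzhenskaya/Gronwall) argument, concluded by the embedding $W^{1,2}(\Omega) \hookrightarrow L^6(\Omega)$ and mass conservation. The only cosmetic difference is that you perform the Sobolev reduction first and track the constant $\tfrac14$ from $J \ge 4f$, whereas the paper uses the cruder $f \le J$; both are fine.
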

\begin{proof}
We define
$$ J(t) \defs 4 \int_\Omega |\nabla \sqrt u|^2 + \int_\Omega |\Delta v|^2 . $$
Taking $\varphi = u$, $\psi = v$ and $\varepsilon = \frac14$ in Lemma~\ref{lemmaone} and making use of Lemma~\ref{mainest2}, we arrive at
\begin{align*}
  \dot J(t)
&\le c_1\lr{\int_\Omega{\abs{\nabla \sqrt u}^2}}^{3} + c_1
 \le c_1\lr{\int_\Omega{\abs{\nabla \sqrt u}^2}}^{2} J(t) + c_1
\end{align*}
in $(0, T)$ for some $c_1 > 0$.
Thus, with $K(t) \defs c_1 \int_0^t \lr{ \int_\Omega{\abs{\nabla \sqrt u}^2}}^{2}$, $t \in (0, T)$, we have
\begin{align*}
      J(t)
  \le \e^{K(t)} J(0) + c_1 \int_0^t \e^{K(t-s)} \,\d s
  \le \e^{K(T)} J(0) + c_1 T \e^{K(T)}
\end{align*}
for all $t \in (0, T)$. Since $K(T) < \infty$ by assumption, we obtain boundedness of $\sup_{t \in (0, T)} \|\nabla \sqrt{u(\cdot, t)}\|_{L^2(\Omega)}$,
which, in conjunction with Lemma~\ref{l1_bdd}, implies the desired estimate as $W^{1, 2}(\Omega)$ embeds continuously into $L^6(\Omega)$.
\end{proof}

Next, we show the higher regularity of the obtained solution.

\begin{lemma}\label{u_linfty}
Under the assumptions of Lemma~\ref{u_l3} there is $C > 0$ such that
  \begin{align*}
    \|u(\cdot, t)\|_{L^\infty(\Omega)} + \|v(\cdot, t)\|_{L^\infty(\Omega)} \le C \qquad \text{for all}\quad t \in (0, T).
  \end{align*}
\end{lemma}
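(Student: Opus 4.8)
The plan is a standard two-stage parabolic bootstrap. Starting from the $L^3$-bound of Lemma~\ref{u_l3}, I would successively raise the integrability of $u$ by means of the $L^p$--$L^q$ smoothing estimates for the Neumann heat semigroup $(\e^{\tau\Delta})_{\tau \ge 0}$, in the two forms $\norm{\nabla \e^{\tau\Delta} w}_{\leb q} \le C\lr{1 + \tau^{-\frac12 - \frac32(\frac1p - \frac1q)}}\norm{w}_{\leb p}$ and $\norm{\e^{\tau\Delta}\nabla\cdot\mathbf w}_{\leb q} \le C\lr{1 + \tau^{-\frac12 - \frac32(\frac1p - \frac1q)}}\norm{\mathbf w}_{\leb p}$, valid for $1 \le p \le q \le \infty$. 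Since we work on the finite interval $(0, T)$, each time integral arising below converges precisely when the singularity at $\tau = 0$ carries an exponent strictly above $-1$; recall also that, after the reduction made before Lemma~\ref{u_l3}, we may assume $u_0, v_0 \in C^\infty(\Ombar)$.

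First I would bound $\nabla v$. By Lemma~\ref{u_l3} combined with Lemma~\ref{l1_bdd}, $u \in L^\infty(0, T; \leb3)$. Inserting this into the representation $v(t) = \e^{t(\Delta - 1)}v_0 + \int_0^t \e^{(t-s)(\Delta-1)}u(s)\,\d s$ and applying the gradient estimate with $p = 3$, the integrand is controlled by $(t-s)^{-1 + \frac{3}{2q}}$, whose singularity is integrable for every finite $q$ but degenerates exactly as $q \to \infty$; hence $\nabla v \in L^\infty(0, T; \leb q)$ for all $q \in [1, \infty)$, which is all that $u \in \leb3$ yields directly. I would then write the first equation in \eqref{CRu} as $\partial_t u = \Delta u + \nabla\cdot(u\nabla v)$ and set $\mathbf w \defs u\nabla v$; by Hölder's inequality, $\mathbf w \in L^\infty(0, T; \leb s)$ for every $s < 3$. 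The divergence-form smoothing estimate then improves $u$ to $L^\infty(0, T; \leb{q'})$ whenever $\frac12 + \frac32(\frac1s - \frac1{q'}) < 1$, and letting $s \uparrow 3$ renders the admissible $q'$ arbitrarily large, so that $u \in L^\infty(0, T; \leb{q'})$ for all $q' \in [1, \infty)$.

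With $u$ now supercritical, the endpoint estimates become admissible. Fixing any $p > 3$ and repeating the estimate for $\nabla v$ with this $p$, the singularity exponent $-\frac12 - \frac{3}{2p}$ is strictly above $-1$, so the gradient-smoothing integral converges up to $q = \infty$ and $\nabla v \in L^\infty(0, T; \leb\infty)$. Consequently $\mathbf w = u\nabla v \in L^\infty(0, T; \leb p)$ for some $p > 3$, and a final application of the divergence-form estimate --- whose singularity exponent $-\frac12 - \frac{3}{2p}$ is again above $-1$ --- yields $u \in L^\infty(0, T; \leb\infty)$. The bound on $v$ then follows at once: with $u$ bounded, the comparison principle applied to $\partial_t v = \Delta v - v + u$ gives $\norm{v(\cdot, t)}_{\leb\infty} \le \max\lrb{\norm{v_0}_{\leb\infty}, \norm{u}_{L^\infty(\Omega \times (0, T))}}$, whence the asserted control of $\norm{u(\cdot, t)}_{\leb\infty} + \norm{v(\cdot, t)}_{\leb\infty}$.

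The main obstacle is the criticality of the initial exponent: in three dimensions the bound $\nabla v \in \leb\infty$ requires $u \in \leb p$ with $p$ strictly larger than $3$, so the $L^3$-estimate of Lemma~\ref{u_l3} lies exactly at the threshold and no single smoothing step can reach $L^\infty$. The crux is therefore to first gain an arbitrarily small surplus of integrability on $u$ through the intermediate stage $\nabla v \in \bigcap_{q < \infty} \leb q$, after which the endpoint estimates become legitimate. Keeping each exponent inequality on the correct side of its threshold is the only point requiring care; the remaining manipulations are routine.
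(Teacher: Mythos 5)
Your proof is correct, but its second half takes a genuinely different route from the paper's. Both arguments begin identically: the $L^\infty(0,T;L^3(\Omega))$ bound from Lemma~\ref{u_l3} is inserted into the Duhamel representation of $v$, and the gradient smoothing estimate for the Neumann heat semigroup gives $\nabla v \in L^\infty(0,T;L^q(\Omega))$ for finite $q$ (the paper fixes a single finite $q>3$, which also yields the bound on $\|v\|_{L^\infty(\Omega)}$ via the embedding $W^{1,q}(\Omega) \hookrightarrow L^\infty(\Omega)$). At that point the paper closes the argument in a single further step: it estimates $\|u(\cdot,t)\|_{L^\infty(\Omega)}$ by Duhamel with $\|u\nabla v\|_{L^r(\Omega)}$ for some $3<r<q$, controls $\|u\|_{L^\lambda(\Omega)}$ with $\lambda=\frac{rq}{q-r}$ by the interpolation $\|u\|_{L^\infty(\Omega)}^\theta\|u\|_{L^1(\Omega)}^{1-\theta}$ against the conserved mass, and absorbs the resulting term $\sup_s\|u\|_{L^\infty(\Omega)}^\theta$ (with $\theta<1$) via Young's inequality. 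You instead avoid any absorption by adding an intermediate bootstrap stage: from $u\nabla v\in L^\infty(0,T;L^s(\Omega))$ with $s<3$ arbitrary you raise $u$ to $L^\infty(0,T;L^{q'}(\Omega))$ for every finite $q'$, which puts all subsequent exponents strictly on the subcritical side, so that $\nabla v\in L^\infty(0,T;L^\infty(\Omega))$ and then $u\in L^\infty(0,T;L^\infty(\Omega))$ follow from two more direct applications of the smoothing estimates; the bound on $v$ is then obtained by the comparison principle rather than by Sobolev embedding. Your route costs one extra iteration of the semigroup machinery but needs neither mass conservation (Lemma~\ref{l1_bdd}) nor a self-referential absorption inequality --- each of your steps produces a bound outright, whereas the paper's absorption step implicitly relies on the a priori finiteness of $\sup_{s<t}\|u(\cdot,s)\|_{L^\infty(\Omega)}$ for $t<T$, guaranteed by smoothness of the solution up to any such $t$. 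Both arguments rest on the same two Neumann-heat-semigroup estimates and on the reduction to smooth initial data made at the beginning of Section~\ref{fourth}, so either is a valid proof of the lemma.
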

\begin{proof}
  We fix $3 < r < q < \infty$.
  Making use of well-known semigroup estimates (cf.\ \cite[Lemma~1.3~(ii) and (iii)]{lp_lq}), we obtain
  \begin{align*}
    &\mathrel{\hphantom{\le}} \|\nabla v(\cdot, t)\|_{\leb q} \\
    &     \le  \|\nabla \ure^{t (\Delta-1)} v_0\|_{\leb q}
          + \int_0^t \|\ure^{(t-s) (\Delta-1)} u(\cdot, s) \|_{\leb q} \ds \\
    &\le  c_1 \ure^{-t} \|\nabla v_0\|_{\leb q}
          + c_2 \int_0^t \left(1+(t-s)^{-\frac12 - \frac32 (\frac13 - \frac1q)}\right) \ure^{-(t-s)} \|u(\cdot, s)\|_{\leb 3} \ds \\
    &\le  c_1 \|\nabla v_0\|_{\leb q}
           + c_2 \sup_{s \in (0, T)} \|u(\cdot, s)\|_{\leb 3} \int_0^{T} \left(1+s^{-\frac12 - \frac32 (\frac13 - \frac1q)}\right) \ds
  \end{align*}
  for all $t \in (0, T)$ and some $c_1, c_2 > 0$.
  Since $-\frac12 - \frac32 (\frac13 - \frac1q) > -1$ and recalling Lemma~\ref{u_l3},
  we conclude that there is $c_3 > 0$ such that $\|\nabla v(\cdot, t)\|_{\leb q} \le c_3$ for all $t \in (0, T)$.
  Since $q > 3$, $W^{1, q}(\Omega)$ embeds continuously into $\leb\infty$ and so the above estimate in conjunction with Lemma~\ref{l1_bdd} imply that $\sup_{t \in (0, T)} \|v(\cdot, t)\|_{\leb \infty}$ is finite as well.

  Relying on the maximum principle and again on well-known semigroup estimates (cf.\ \cite[Lemma~1.3~(iv)]{lp_lq}), we further estimate
  \begin{align*}
    &\mathrel{\hphantom{\le}}\left\|u(\cdot, t)\right\|_{\leb\infty} \\
    &\le  \|\ure^{t \Delta} u_0\|_{\leb \infty}
          + \int_0^t \left\|\ure^{(t-s) \Delta} \nabla \cdot (u \nabla v)(\cdot, s) \ds \right\|_{\leb\infty} \\
    &\le  \left\| u_0\right\|_{\leb \infty}
          + c_4 \int_0^t \left(1 + (t-s)^{-\frac12 - \frac32 (\frac1r-\frac1\infty)}\right) \|(u \nabla v)(\cdot, s)\|_{\leb r} \ds \\
    &\le  \left\| u_0\right\|_{\leb \infty}
          + c_4 \sup_{s \in (0, t)} \|(u \nabla v)(\cdot, s)\|_{\leb r} \int_0^T \left(1 + s^{-\frac12 - \frac3{2r}}\right) \ds
  \end{align*}
  for all $t \in (0, T)$ and some $c_4 > 0$.
  Since with $\lambda \defs \frac{rq}{q-r}$ and $\theta \defs \frac{\lambda-1}{\lambda} \in (0, 1)$ we have
  \begin{align*}
          \|(u \nabla v)(\cdot, s)\|_{\leb r}
    &\le  \|u(\cdot, s)\|_{\leb{\lambda}} \|\nabla v(\cdot, s)\|_{\leb q} \\
    &\le  \|u(\cdot, s)\|_{\leb\infty}^\theta \|u(\cdot, s)\|_{\leb1}^{1-\theta} \|\nabla v(\cdot, s)\|_{\leb q}
  \end{align*}
  for all $s \in (0, T)$, we see that there is $c_5 > 0$ such that
  \begin{align*}
          \|u(\cdot, t)\|_{\leb\infty}
    &\le  c_5 + c_5 \sup_{s \in (0, t)} \|u(\cdot, s)\|_{\leb\infty}^\theta
  \end{align*}
  for all $t \in (0, T)$.
  For $t \in (0, T)$, we set $A(t) \defs \sup_{s \in (0, t)} \|u(\cdot, t)\|_{\leb\infty}$.
  Since $\theta \in (0, 1)$, by means of Young's inequality we obtain $A(t) \le c_5 + c_5 A^\theta(t) \le c_6 + \frac12 A(t)$ for some $c_6 > 0$ (not depending on $t$)
  and hence also $A(t) \le 2c_6$ for all $t \in (0, T)$.
  Taking $t \nearrow T$ shows that also $u$ remains bounded in $\Omega \times (0, T)$.
\end{proof}

We are now in position to prove Theorem $\ref{mainresult}$.

\begin{proof}[Proof of Theorem~\ref{mainresult}]
  Suppose that $\tmax < \infty$, then Lemma~\ref{u_linfty} asserts boundedness of $u$ and $v$ in $\Omega \times (0, \tmax)$.
  However, this contradicts the extensibility criterion in Lemma~\ref{local_ex}.
\end{proof}

\section{Conclusion} \label{s:conclusion}
On the one hand, we obtained a condition which guarantees global existence of solutions for the chemorepulsion system in three-dimensional space. This result can be improved, as suggested to us by M.~Winkler and an anonymous referee, see Appendix~\ref{sec:alt_proof}. On the other hand, we notice from our computations that the concavity of the function $v$ would greatly simplify our argument. It would lead to boundedness of the function $I(t)$, and hence to the global existence of solutions, as shown in this paper. Indeed, we see from $(\ref{notconvex})$ that if the function $v$ is concave, i.e., its Hessian is negative-semidefinite,
\[ x^T\, D^2v \, x \le 0 \qquad \mbox{for every} \qquad x \in \R^n,
\]
we have the following differential inequality for $I(t)$
\begin{align} \nonumber
\dot{I}(t) &\le 0.
\end{align}

%
%

From the equation \eqref{CRu} we see that $v$ is not far from being concave. Taking a simplified version of  the equation for $v$ [assuming $v_t = 0$ and neglecting $v$ on the right-hand side of the second equation in \eqref{CRu}], we have
\[
\Delta v = - u \le 0, 
\]
which would also hold if the Hessian of $v$ was negative-semidefinite.

Verifying concavity of a solution of a parabolic boundary value problem posed on a convex domain has been studied before. In the context of one parabolic equation of certain type, some positive results can be found in \cite{Korevar}, for example. However, we are not aware of any result in this direction for systems of equations.

\appendix
\section{A new inequality} \label{s:bernis}

As a byproduct of our arguments, we discovered a differential inequality relating the second norm of the Hessian of the square-root of a positive function with the dissipation of the Fisher information along the heat flow. Due to the fact that both of these quantities appear in the calculation of the evolution of the Fisher information along the heat flow, the following inequality is interesting in its own right and may have further applications. In particular, it was used by the first author in \cite{BC}, where global-in-time regular unique solution to a $1$D thermoelasticity system is obtained. Next, a very recent application of the inequality \eqref{cieslakineq} led to the interesting result in the theory of $1$D combustion, see \cite{LY}.

\begin{lemma}\label{cieslakinequality}
Let $\Omega \subset \R^n$ be a smooth bounded domain. For every positive function $u \in C^2(\Ombar)$ with the boundary condition $\left.\frac{\partial u}{\partial \nu}\right\rvert_{\partial \Omega} = 0$, we have
\begin{equation}\label{cieslakineq}
\int_{\Omega}{\abs{D^2\sqrt{u}}^2} \le C\int_{\Omega}{u\abs{D^2\log{u}}^2},
\end{equation}
where $C = 1 + \frac{\sqrt{n}}{2} + \frac{n}{8}$.
\end{lemma}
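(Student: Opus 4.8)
The plan is to change variables to $\varrho \defs \sqrt u$ (which is positive and $C^2$ on $\bar\Omega$ since $u>0$) and to recognise both sides of \eqref{cieslakineq} as weighted $L^2$-norms of $D^2\varrho$, after which the inequality collapses to a one-line pointwise split plus Winkler's inequality. First I would record the two algebraic identities that rewrite the integrands. The left-hand side is trivially $\abs{D^2\sqrt u}^2 = \abs{D^2\varrho}^2$. For the right-hand side, differentiating $\log u = 2\log\varrho$ twice gives $D^2\log u = 2D^2\log\varrho$ together with $\varrho\,D^2\log\varrho = D^2\varrho - \varrho^{-1}\nabla\varrho\otimes\nabla\varrho$, so that
\[
u\,\abs{D^2\log u}^2 = 4\varrho^2\abs{D^2\log\varrho}^2 = 4\left\lvert D^2\varrho - \frac{\nabla\varrho\otimes\nabla\varrho}{\varrho}\right\rvert^2 .
\]
This is exactly the identity already exploited in the proof of Lemma~\ref{mainestimate}, so \eqref{cieslakineq} becomes $\int_\Omega\abs{D^2\varrho}^2 \le 4C\int_\Omega\lvert D^2\varrho - \varrho^{-1}\nabla\varrho\otimes\nabla\varrho\rvert^2$.

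Next I would apply the elementary matrix inequality $\abs{a+b}^2 \le 2\abs a^2 + 2\abs b^2$ (in the Frobenius norm) with $a = D^2\varrho - \varrho^{-1}\nabla\varrho\otimes\nabla\varrho$ and $b = \varrho^{-1}\nabla\varrho\otimes\nabla\varrho$, using that the rank-one tensor $\nabla\varrho\otimes\nabla\varrho$ satisfies $\lvert\varrho^{-1}\nabla\varrho\otimes\nabla\varrho\rvert^2 = \varrho^{-2}\abs{\nabla\varrho}^4$, to obtain
\[
\abs{D^2\varrho}^2 \le 2\left\lvert D^2\varrho - \frac{\nabla\varrho\otimes\nabla\varrho}{\varrho}\right\rvert^2 + 2\,\frac{\abs{\nabla\varrho}^4}{\varrho^2}.
\]
Upon integrating, the first term contributes exactly $\tfrac12\int_\Omega u\abs{D^2\log u}^2$. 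For the second I would pass back to $u$ via $\nabla u = 2\varrho\nabla\varrho$, giving $\varrho^{-2}\abs{\nabla\varrho}^4 = \tfrac1{16}\abs{\nabla u}^4 u^{-3}$, and then invoke Winkler's inequality \eqref{winklerineq} of Lemma~\ref{winkler}, whose hypotheses ($u>0$, $u\in C^2(\bar\Omega)$, $\partial_\nu u = 0$) are precisely our assumptions, to get
\[
\int_\Omega\frac{\abs{\nabla\varrho}^4}{\varrho^2} = \frac1{16}\int_\Omega\frac{\abs{\nabla u}^4}{u^3} \le \frac{(2+\sqrt n)^2}{16}\int_\Omega u\abs{D^2\log u}^2 .
\]

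Collecting the two contributions yields $\int_\Omega\abs{D^2\varrho}^2 \le \bigl(\tfrac12 + \tfrac{(2+\sqrt n)^2}{8}\bigr)\int_\Omega u\abs{D^2\log u}^2$, and since $\tfrac12 + \tfrac{(2+\sqrt n)^2}{8} = 1 + \tfrac{\sqrt n}{2} + \tfrac n8 = C$, the claim follows. The attractive point is that every integration by parts and all dependence on the geometry of $\partial\Omega$ are already encapsulated in Winkler's inequality, which holds on arbitrary smooth bounded domains; hence no convexity of $\Omega$ is required. I do not expect a genuine obstacle here, the proof being essentially a rearrangement of computations already present in Section~\ref{section_main}; the only delicate point is the constant bookkeeping. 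In particular, using the crude split $\abs{a+b}^2\le 2\abs a^2+2\abs b^2$ (rather than an optimised weighted Young inequality, which would instead give the slightly smaller $1 + \tfrac{\sqrt n}{2} + \tfrac n{16}$) is exactly what produces the stated value $C = 1 + \tfrac{\sqrt n}{2} + \tfrac n8$, so one must resist over-optimising if one wishes to reproduce it verbatim.
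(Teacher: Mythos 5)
Your proof is correct and is essentially the paper's own argument in different notation: both rest on the same decomposition $D^2\sqrt{u} = \tfrac12\sqrt{u}\,D^2\log u + \tfrac14 u^{-3/2}\nabla u\otimes\nabla u$ (you phrase it via $\varrho=\sqrt u$ and Frobenius norms, the paper entry-wise in $u$), the same elementary bound $\abs{a+b}^2\le 2\abs{a}^2+2\abs{b}^2$, and the same application of Winkler's inequality \eqref{winklerineq} to $u$, yielding the identical constant $\tfrac12+\tfrac18(2+\sqrt n)^2 = 1+\tfrac{\sqrt n}{2}+\tfrac n8$.
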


\begin{remark}
We note that the inequality $(\ref{cieslakineq})$ does not hold pointwise, i.e., there is no constant $C > 0$ such that, for every positive $u \in C^2(\Ombar)$,
\[ \abs{D^2\sqrt{u}}^2 \le Cu\abs{D^2\log{u}}^2 \quad \text{in} \quad \Ombar. \]
\end{remark}

\begin{proof}[Proof of Lemma $\ref{cieslakinequality}$]
We first note that
\[ \lra{D^2\sqrt{u}}^2_{ij} = \lr{\frac{\partial_{x_ix_j}u}{2u^{1/2}} - \frac{\partial_{x_i}u\partial_{x_j}u}{4u^{3/2}}}^2 = \frac{1}{4}\lr{\frac{\partial_{x_ix_j}u}{u^{1/2}} - \frac{1}{2}\frac{\partial_{x_i}u\partial_{x_j}u}{u^{3/2}}}^2 \]
and
\[ u\lra{D^2\log{u}}^2_{ij} = u\lr{\frac{\partial_{x_ix_j}u}{u} - \frac{\partial_{x_i}u\partial_{x_j}u}{u^2}}^2 = \lr{\frac{\partial_{x_ix_j}u}{u^{1/2}} - \frac{\partial_{x_i}u\partial_{x_j}u}{u^{3/2}}}^2
\]
in $\Ombar$.
Using the simple fact that $\lr{a + b}^2 \le 2\lr{a^2 + b^2}$, we get
\begin{align}\nonumber
\lra{D^2\sqrt{u}}^2_{ij} &= \frac{1}{4}\lr{\frac{\partial_{x_ix_j}u}{u^{1/2}} - \frac{\partial_{x_i}u\partial_{x_j}u}{u^{3/2}} + \frac{1}{2}\frac{\partial_{x_i}u\partial_{x_j}u}{u^{3/2}}}^2 \\ \nonumber
&\le \frac{1}{4}\lra{2\lr{\frac{\partial_{x_ix_j}u}{u^{1/2}} - \frac{\partial_{x_i}u\partial_{x_j}u}{u^{3/2}}}^2 + \frac{1}{2}\lr{\frac{\partial_{x_i}u\partial_{x_j}u}{u^{3/2}}}^2} \\ \nonumber
&= \frac{1}{2}{u\lra{D^2\log{u}}^2_{ij} + \frac{1}{8}\lr{\frac{\partial_{x_i}u\partial_{x_j}u}{u^{3/2}}}^2}.
\end{align}
Therefore, we obtain
\begin{align}\nonumber
{\abs{D^2\sqrt{u}}^2} &= {\sum_{i, j = 1}^n{\lra{D^2\sqrt{u}}^2_{ij}}} \le {\sum_{i, j = 1}^n{\lra{\frac{1}{2}{u\lra{D^2\log{u}}^2_{ij} + \frac{1}{8}\lr{\frac{\partial_{x_i}u\partial_{x_j}u}{u^{3/2}}}^2}}}}  \\ \label{cieslakfinal}
&= \frac{1}{2}{u\abs{D^2\log{u}}^2} + \frac{1}{8}{\frac{\abs{\nabla u}^4}{u^3}}
\quad \text{in}\quad \Ombar.
\end{align}
Applying Lemma $\ref{winkler}$ to $(\ref{cieslakfinal})$, we get
\begin{align}\nonumber
\int_{\Omega}{\abs{D^2\sqrt{u}}^2} \le \lr{\frac{1}{2} + \frac{1}{8}\lr{2 + \sqrt{n}}^2}\int_{\Omega}{u\abs{D^2\log{u}}^2},
\end{align}
as required.
\end{proof}

\section{Alternative proof}\label{sec:alt_proof}
This section is devoted to the presentation of the result communicated to us by M.\,Winkler as well as one of the anonymous referees. It gives
an alternative conditional result, extending our Theorem~\ref{mainresult}. On the one hand, no convexity of the domain is required, on the other hand, only a zero order estimate of $u$ is required. 

It is based on the well-known fact that in dimension $3$, bounding the \linebreak $L^\infty((0,\tmax); L^p(\Omega))$ norm of $u$ for any $p > \frac32$ allows prolongation of a solution to \eqref{CRu}, see for instance \cite{BBTW}.  

\begin{lemma}
Let $\Omega \subset \R^3$ be a smooth, bounded domain and let $u_0, v_0$ be as in \eqref{eq:local_ex:init}. Suppose that the maximal existence time $\tmax$ of the solution $(u, v)$ to \eqref{CRu}--\eqref{IC} given by Lemma~\ref{local_ex} is finite. Then
\begin{align} \label{l2l3}
\int_0^{\tmax}{\norm{u(t)}_{L^3(\Omega)}^2 \, \mathrm{d}t} = \infty.
\end{align}
In particular, 
$$ \int_0^{\tmax}{\norm{\nabla\sqrt{u(t)}}_{L^2(\Omega)}^4 \, \mathrm{d}t} = \infty. $$
\end{lemma}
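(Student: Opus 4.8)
The plan is to argue by contraposition: assuming $\tmax < \infty$, I will show that if $\int_0^{\tmax} \|u\|_{\leb3}^2 < \infty$ were to hold, then $u$ would remain bounded in $\leb2$ uniformly in time, which is incompatible with prolongability. Since $2 > \frac32$, the quoted extensibility criterion turns a bound on $\sup_{t \in (0,\tmax)} \|u(\cdot,t)\|_{\leb2}$ into $\tmax = \infty$, a contradiction. The concluding ``in particular'' then requires no new work: by the Sobolev embedding $W^{1,2}(\Omega) \hookrightarrow L^6(\Omega)$ one has $\|u\|_{\leb3}^2 = \|\sqrt u\|_{L^6(\Omega)}^4 \le C \|\nabla \sqrt u\|_{\leb2}^4 + C\|u\|_{\leb1}^2$, so that finiteness of $\int_0^{\tmax} \|\nabla\sqrt u\|_{\leb2}^4$, together with $\tmax < \infty$ and the conserved $\leb1$ bound from Lemma~\ref{l1_bdd}, would force $\int_0^{\tmax}\|u\|_{\leb3}^2 < \infty$, contradicting \eqref{l2l3}.

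The first and decisive step is to feed the hypothesis into the second equation via maximal parabolic regularity. Writing the $v$-equation as $v_t - \Delta v + v = u$ with homogeneous Neumann data, maximal $L^2$--$L^3$ regularity for $-\Delta + 1$ gives $\int_0^{\tmax} \|\Delta v(\cdot,t)\|_{\leb3}^2 \,\mathrm dt \le C \int_0^{\tmax} \|u(\cdot,t)\|_{\leb3}^2\,\mathrm dt + C < \infty$, the additive constant accounting for the (smooth, after a harmless time shift as in Section~\ref{fourth}) initial data. This is the crucial gain: the borderline integrability of $u$ in $\leb3$ is transferred to $\Delta v$ in the same space.

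The second step is a standard $\leb2$ energy estimate for $u$ closed by this information. Testing the first equation with $u$ and integrating by parts twice (using the no-flux conditions) yields $\frac{\d}{\d t}\intom u^2 = -2\intom |\nabla u|^2 + \intom u^2 \Delta v$. I estimate the coupling term by Hölder as $\intom u^2 \Delta v \le \|u\|_{\leb3}^2 \|\Delta v\|_{\leb3}$, interpolate $\|u\|_{\leb3}^2 \le \|u\|_{\leb2}\|u\|_{\leb6}$, and bound $\|u\|_{\leb6} \le C\|\nabla u\|_{\leb2} + C$ through $W^{1,2}\hookrightarrow L^6$ and the $\leb1$ bound. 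A Young inequality then absorbs the resulting $\|\nabla u\|_{\leb2}\|\Delta v\|_{\leb3}$ contribution into the dissipation $-2\intom|\nabla u|^2$, leaving a differential inequality of the form $\frac{\d}{\d t}\|u\|_{\leb2}^2 \le C\bigl(\|\Delta v\|_{\leb3}^2 + 1\bigr)\|u\|_{\leb2}^2 + C\|\Delta v\|_{\leb3}^2$. Since both coefficients are integrable over $(0,\tmax)$ by the previous step, Gronwall's lemma gives $\sup_{t\in(0,\tmax)}\|u(\cdot,t)\|_{\leb2} < \infty$, completing the argument.

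The genuine difficulty is that every naive estimate of the coupling term leaves a product of two quantities each only in $L^1$ in time (for instance $\|u\|_{\leb3}^2$ against $\|\Delta v\|_{\leb2}^2$), which Hölder cannot integrate; this reflects the borderline nature of the $L^2_tL^3_x$ hypothesis. Both the maximal-regularity upgrade, which buys spatial integrability of $\Delta v$ in $\leb3$ rather than merely $\leb2$, and the use of the parabolic dissipation $-2\intom|\nabla u|^2$ to absorb one full gradient factor are exactly what is needed to convert this borderline product into a legitimate Gronwall coefficient. I would expect verifying the maximal-regularity input with the precise dependence on the initial data to be the only point demanding care; the remaining manipulations are routine.
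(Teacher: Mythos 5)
Your proposal is correct and follows essentially the same route as the paper's own proof: contraposition, maximal $L^2$--$L^3$ parabolic regularity for the $v$-equation to convert $u \in L^2_t L^3_x$ into $\Delta v \in L^2_t L^3_x$, an $L^2$ energy estimate for $u$ whose coupling term $\intom u^2 \Delta v$ is handled by H\"older, interpolation and Young so that the gradient term is absorbed into the dissipation, Gronwall, and finally the extensibility criterion for $p = 2 > \frac32$; your derivation of the ``in particular'' via $\|u\|_{\leb3}^2 = \|\sqrt u\|_{L^6(\Omega)}^4$ and the embedding $W^{1,2}(\Omega) \hookrightarrow L^6(\Omega)$ is the same Gagliardo--Nirenberg-plus-mass-conservation observation the paper invokes. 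The only cosmetic difference is that the paper works on the interval $(\tmax/2, \tmax)$ to sidestep initial-data issues in the maximal regularity step, whereas you use a time shift, and your Gronwall inequality carries a harmless inhomogeneous term where the paper's is homogeneous.
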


\begin{proof}
In view of the Gagliardo--Nirenberg inequality and conservation of mass, we only need to verify \eqref{l2l3}. To this end, we suppose that \eqref{l2l3} does not hold, which allows us to first apply maximal Sobolev regularity theory to the second equation in \eqref{CRu} to see that
\begin{align} \label{hest}
\int_{\frac\tmax2}^{\tmax}{\norm{\Delta v(t)}_{L^3(\Omega)}^2 \, \mathrm{d}t} < \infty.
\end{align}
Then, we use the first equation in \eqref{CRu} along with an integration by parts to get
\begin{align} \nonumber
\frac{1}{2}\frac{\mathrm{d}}{\mathrm{d}t}\int_{\Omega}{u^2} + \int_{\Omega}{\abs{\nabla u}^2} + \int_{\Omega}{u^2} &\le \frac{1}{2}\int_{\Omega}{u^2\Delta v} + \int_{\Omega}{{u}^2} \\ \nonumber
&\le \frac{1}{2}\norm{u}_{L^3(\Omega)}^2\norm{\Delta v}_{L^3(\Omega)} + \int_{\Omega}{{u}^2}
\end{align}
in $(0, \infty)$.
Next, applications of the Gagliardo--Nirenberg interpolation and Young’s inequality give for all $t \in (0, \tmax)$,
\begin{align} \nonumber
\frac{1}{2}\frac{\mathrm{d}}{\mathrm{d}t}\int_{\Omega}{u^2} + \int_{\Omega}{\abs{\nabla u}^2} + \int_{\Omega}{u^2} \le \int_{\Omega}{\lr{\abs{\nabla u}^2 + u^2}} + c_1\lr{\norm{\Delta v}_{L^3(\Omega)}^2 + 1}\norm{u}_{L^2(\Omega)}^2
\end{align}
for some constant $c_1 > 0$.

Writing $y(t) \defs \int_{\Omega}{u^2(\cdot, t)}$ and $h(t) \defs 2c_1\lr{\norm{\Delta v(\cdot, t)}_{L^3(\Omega)}^2 + 1}$ for $t \in (0, \tmax)$, we arrive at a differential inequality
\begin{equation} \label{diffh}
\dot{y}(t) \le h(t)y(t) \qquad \mbox{for all } t \in (0, \tmax).
\end{equation}
Since $h \in L^1(\frac\tmax2, \tmax)$ by \eqref{hest}, integration in time of \eqref{diffh} shows boundedness of $u$ in $L^\infty((0, \tmax); \leb2)$, contradicting finiteness of $\tmax$.
\end{proof}

\section*{Acknowledgments}
T.C.\ was supported by the National Science Center of Poland grant SONATA BIS 7 number UMO-2017/26/E/ST1/00989. K.H.\ was partially supported by the National Science Center of Poland grant SONATA BIS 10 number UMO-2020/38/E/ST1/00469. M.S.\,was supported by the National Science Center of Poland grant SONATA BIS 10 number UMO-2020/38/E/ST1/00596. The authors are grateful to both anonymous referees as well as to Michael Winkler for careful reading of the manuscript and suggestions leading to an essential improvement of our results. In particular, Appendix~\ref{sec:alt_proof} was observed by M.~Winkler and the anonymous referee.

\bibliographystyle{acm}

\end{document}